\documentclass[12pt,a4paper]{amsart}
\usepackage{fancyhdr,a4wide,fontenc}
\usepackage{amsmath}
\usepackage{amssymb}
\usepackage{amsthm}
\usepackage{float}
\usepackage{caption}
\usepackage[caption = false]{subfig}
\usepackage{enumerate}
\usepackage{fullpage}
\usepackage{graphicx}
\usepackage{multicol}
\usepackage{color}

\usepackage{hyperref,lscape}

\input xy
\xyoption{all}

\usepackage[all]{xy}

\newtheorem{thm}{Theorem}

\newtheorem{lemma}[thm]{Lemma}

\theoremstyle{definition}

\newtheorem{que}[thm]{Question}
\newtheorem{ex}[thm]{Example}

\newcommand{\bfl}{{\bf{\lambda}}}
\newcommand{\bfm}{{\bf{\mu}}}

\title{
Jordan structures of nilpotent matrices in the centralizer of a nilpotent matrix with two Jordan blocks of the same size}
\author{Du\v sko Bogdani\'{c}}
\address[D. Bogdani\'{c}]{Faculty of Natural Sciences and Mathematics, University of Banja Luka, Mladena Stojanovi\' ca 2, 78000 Banja Luka, Bosnia and Herzegovina}
\email{dusko.bogdanic@gmail.com}

\author{Alen \DJ uri\'{c}}
\address[A. \DJ uri\'{c}]{Université Paris Cité, CNRS, Inria, IRIF, F-75013, Paris, France}
\email{alen.djuric@protonmail.com}

\author{Sara Koljan\v{c}i\'{c}}
\address[S. Koljan\v{c}i\'{c}]{Faculty of Natural Sciences and Mathematics, University of Banja Luka, Mladena Stojanovi\' ca 2, 78000 Banja Luka, Bosnia and Herzegovina}
\email{sara.jevdjenic@pmf.unibl.org}

\author{Polona Oblak}
\address[P.~Oblak]{Faculty of Computer and Information Science, University of Ljubljana, Ve\v cna pot 113, SI-1000 Ljubljana, Slovenia; Faculty of Mathematics and Physics, University of Ljubljana and Institute of Mathematics, Physics, and Mechanics, Jadranska ulica 19, 1000 Ljubljana, Slovenia}
\email{polona.oblak@fri.uni-lj.si}

\author{Klemen \v Sivic}
\address[K. \v Sivic]{Faculty of Mathematics and Physics, University of Ljubljana, and Institute of Mathematics, Physics, and Mechanics, Jadranska ulica 19, 1000 Ljubljana, Slovenia}
\email{klemen.sivic@fmf.uni-lj.si}

\thanks{The authors acknowledge the financial support from the bilateral project no.~BI-BA/19-20-044 funded by Slovenian Research Agency and Ministry of Civil Affairs, Bosnia and Herzegovina. Polona Oblak and Klemen \v Sivic have received funding from Slovenian Research Agency  
(research core funding no.~P1-0222 and projects no.~J1-3004 (PO\&K\v S) and N1-0103 (K\v S)) and Alen \DJ uri\'{c} has received funding from the European Union's Horizon 2020 research and innovation programme under the Marie Skłodowska-Curie grant agreement No. 754362.}
\subjclass[2020]{15A27,  14L30, 15A21}
 \keywords{nilpotent matrix, centralizer, nilpotent orbit}
\bigskip

\date{\today}

\begin{document}

\maketitle

\begin{abstract}  
In this paper we characterize all nilpotent orbits under the action by conjugation that intersect the nilpotent centralizer of a nilpotent matrix $B$ consisting of two Jordan blocks of the same size. We list all the possible Jordan canonical forms of the nilpotent matrices that commute with $B$ by characterizing the corresponding partitions.
\end{abstract}

\noindent
%
%
\section{Introduction and background} 
Let $F$ be an arbitrary field and $N$ a positive integer. By  $M_N(F)$ we denote the set of all $N \times N$ matrices over $F$, and by $\mathcal{N}_N$ we denote the set of all nilpotent elements in $M_N(F)$. Note that the group $GL_N(F)$ acts on $\mathcal{N}_N$ by conjugation, i.e., $g\cdot X=gXg^{-1}$ for $g \in GL_N(F)$ and $X \in \mathcal{N}_N$. Recall that each nilpotent $N\times N$ matrix over $F$ is similar to a nilpotent matrix $J$ in the Jordan canonical form. If we assume that Jordan blocks of $J$ are ordered by non-increasing size, then such $J$ is uniquely defined. Hence the set of all the partitions $\bfl=(\lambda _1,\ldots ,\lambda _t)$, $\lambda _1\geq\ldots \geq\lambda _t$, $\sum_{i=1}^t \lambda_i=N$, of the integer $N$ is in a bijective correspondence with the set of all orbits of $\mathcal{N}_N$ under the action of the group $GL_N(F)$. The orbit corresponding to the partition $\bfl$ is denoted by $\mathcal{O}_{\bfl}$. It was first proved by Gerstenhaber~\cite{Gerstenhaber} 
that the dominance ordering on the set of partitions of number $N$ coincides with the inclusion ordering of orbits of $\mathcal{N}_N$. The algebraic and combinatorial properties of nilpotent orbits over various fields and in other reductive Lie algebras have been exhaustively studied, see e.g., \cite{Collingwood2017, disselhorst2021accessibility, MR909951}.

In the last decade, there has been an extensive investigation of Jordan structures of multiplication maps on finite-dimensional commutative algebras. If either the algebra is local and $x$ is an element of the maximal ideal or the algebra is graded and $x$ has a positive degree (for example, $x$ is a linear form), then the multiplication map by $x$ is a nilpotent linear map on the algebra, whose Jordan structure is called the Jordan type of the element $x$. The ``generic" Jordan type is an important invariant of the algebra, which is tightly connected to the Lefschetz properties, see e.g., \cite{Altafi2020, COSTA2019101941, LefschetzProperties, IarrobinoMarquesMcDaniel}. 

Recently, the pairs of commuting nilpotent operators have been extensively explored, see e.g., \cite{Baranovsky2001, Basili2000, Basili2003, MR1753173, Kosir1997}. Let $B\in \mathcal{N}_N$ be an arbitrary nilpotent matrix. Let $\mathcal{C}(B)$ denote the \emph{centralizer of $B$}, i.e., $\mathcal{C}(B)=\{A\in M_N(F);AB=BA\}$,
and let $$\mathcal{N}(B)=\mathcal{C}(B)\cap \mathcal{N}_N$$ denote the \emph{nilpotent centralizer of $B$}, i.e., the set of all nilpotent elements of $\mathcal{C}(B)$. It is known that $\mathcal{N}(B)$ is an irreducible variety,~\cite{Basili2003}.

One of the classical questions in the matrix theory is to determine the intersection of the nilpotent centralizer with the nilpotent orbits.

\begin{que}\label{commuting_partitions}
For a nilpotent matrix $B \in M_N(F)$, determine which of the $GL_N(F)$-orbits of $\mathcal{N}_N$ intersect the nilpotent centralizer of $B$ or, equivalently, determine all possible partitions $\bfl$ corresponding to Jordan canonical forms of the nilpotent matrices that commute with $B$.
\end{que}

In general, this seems to be a difficult question, which has been answered so far only for some specific cases of matrices $B$. 

For example, the nilpotent centralizer of a single Jordan block $J_{(N)}\in 
{\mathcal{O}_{(N)}}$ intersects only $N$ nilpotent orbits. This is the consequence of the fact that the only matrices commuting with $J_{(N)}$ are polynomials in $J_{(N)}$, see e.g., \cite[Theorem~3.2.4.2]{MR2978290}. Thus the partitions $\bfl$ that correspond to the matrices commuting with $J_{(N)}$ are precisely the ones whose parts differ by at most 1, i.e., $\lambda _1-\lambda _t\le 1$. Such partitions are called \emph{almost rectangular},~\cite{KosirOblak2009}. Following \cite{Iarrobino2018}, we denote the unique almost rectangular partition of $N$ into $k$ parts by
$$[N]^k=\left((q+1)^r,q^{k-r}\right),$$
where $q=\left\lfloor\frac{N}{k}\right\rfloor$, $r=N\mod k$, and $m^t$ abbreviates $t$ copies of $m$ in the partition. On the other hand, the matrices over a finite field or over an algebraically closed field of characteristic 0 with largest intersections with nilpotent orbits were characterized in~\cite{Britnell2011, Oblak2012}; if  $N\geq 4$, then $B^2=0$ if and only if $\mathcal{N}(B) \cap \mathcal{O}_{\bfl} \ne \emptyset$ for all partitions $\bfl$ of $N$.

For a general matrix $B$, only partial answers to Question~\ref{commuting_partitions} were given. The lower bound for the number of parts of $\bfl$ was determined in~\cite{Basili2003} and the upper bound for $\lambda_1$ was given in~\cite{Oblak2008}. Moreover, it was proved in~\cite{Baranovsky2001} that if $\bfm'$ is the conjugate partition of the Jordan canonical form of $B$, then  $\mathcal{N}(B) \cap \mathcal{O}_{\bfm'} \ne \emptyset$.

Since $\mathcal{N}(B)$ is an irreducible variety, there exists a partition $\bfl$ of $N$ such that $\mathcal{O}_{\bfl}\cap \mathcal{N}(B)$ is a dense open subset of $\mathcal{N}(B)$. If $\bfm$ is the partition of $N$ that corresponds to the Jordan canonical form of $B$, then  $\mathcal{Q}(\bfm)$ is defined to be the unique partition $\bfl$ such that $\mathcal{O}_{\bfl}\cap \mathcal{N}(B)$ is a dense open subset of $\mathcal{N}(B)$, which is the maximal partition in the dominance ordering, for which its nilpotent orbit intersects $\mathcal{N}(B)$. Recently, the conjectured recursive algorithm to compute $\mathcal{Q}(\bfm)$ (see results and discussions in~\cite{Basili2008, Basili2010, Iarrobino2013,  KosirOblak2009}) has been proved to work for general $\bfm$,~\cite{basili2022maximal}. Furthermore, the preimage $\mathcal{Q}^{-1}(\lambda)$ was completely determined for partitions $\lambda$ with two parts which differ by at least $2$  \cite{Iarrobino2018}. For generalization of $\mathcal{Q}(\mu)$ to other simple Lie algebras, see ~\cite{Panyushev2008}.

Motivated by these partial results and open problems, this paper considers and completely resolves the following special case of Question~\ref{commuting_partitions}.

\begin{que}\label{nn}
Let $B\in \mathcal{N}_{2n}$ be a nilpotent matrix with the Jordan canonical form corresponding to the partition $(n,n)$. Characterize all possible partitions $\bfl$ corresponding to Jordan canonical forms 
of the nilpotent matrices that commute with $B$.
\end{que}

There are some known partitions $\bfl$, for which the answer to Question~\ref{nn} is positive. Namely, since the centralizers of square-zero matrices intersect all nilpotent orbits, any partition of the form $\bfl=(2^t,1^s)$, $2t+s=2n$, gives rise to a nilpotent orbit that intersects ${\mathcal N}(B)$,~\cite{Britnell2011, Oblak2012}. Moreover, we can assume that $B$ is already in its Jordan canonical form, thus $B=J_{(n)}\oplus J_{(n)}$. Since every matrix commutes with its polynomial, it follows that $B$ commutes with $p(J_{(n)})\oplus q(J_{(n)})$, for any polynomials $p,q\in F[X]$. By the above results, it follows that $\bfl=([n]^t,[n]^s)$, where $1\leq t,s\leq n$,  gives a positive answer to Question~\ref{nn} as well. Also, in \cite[Theorem~3.6(c)]{Oblak2012} it was proved that any nilpotent orbit corresponding to an almost rectangular partition of $(2n)$ intersects ${\mathcal N}(B)$. Furthermore, it was proved in the same paper that all nilpotent orbits that give a positive answer to Question~\ref{nn} with the  corresponding partition $\bfl\ne (2n)$ have $\lambda_1\leq n+1$, and ${\mathcal O}_{(n+1,n-1)}$ has a non-empty intersection with $\mathcal{N}(B)$.
  It was noted in~\cite[Example~3.7]{Oblak2012} that these partitions do not constitute the complete list of nilpotent orbits intersecting ${\mathcal N}_{B}$. Some of the partitions $\bfl$ obtained by block antidiagonal and block upper triangular nilpotent matrices commuting with $B$ were listed in~\cite[Theorems~6.7, 6.19]{oblak2008orbits}.

The main objective of this paper is to completely resolve Question~\ref{nn}, and to give a complete list of nilpotent orbits $\mathcal{O}_{\bfl}$ having a non-empty intersection with ${\mathcal N}(B)$, where $B$ has the Jordan canonical form $(n,n)$. We will prove the following theorem.

\begin{thm}\label{main}
Let $B\in \mathcal{N}_{2n}$ be a nilpotent matrix with the Jordan canonical form  corresponding to the partition $(n,n)$. If $\mathcal{N}(B)$ intersects the nilpotent orbit $\mathcal{O}_{\bfl}$, then $\lambda$ is one of the partitions given in Table \ref{tab:nn}. The partitions from the cases (P1)-(P7) always belong to some nilpotent orbits intersecting $\mathcal{N}(B)$, while occurrence of the partitions from (P8)-(P9) depends on the field $F$. In particular, if $F=\mathbb{C}$, then ${\mathcal N}(B)$ intersects exactly the nilpotent orbits ${\mathcal O}_{\bfl}$, where $\bfl$ is given in Table~\ref{tab:nn}.

\begin{table}[htb]
    \centering
    \begin{tabular}{|c||p{6cm}|p{8cm}|}
        \hline
        &$\bfl$ & constraints \\
        \hline\hline
(P1)&$[2n]^{s}$ & $1\leq s \leq 2n$.\\
\hline
(P2)&$([n+m]^{m+z},[n-m]^{l-m+z})$ & $0\le z\le n-1$, $0<2m\le l< n-z$, $\lceil \frac{n+m}{m+z}\rceil \ge \lceil \frac{n-m}{l-m+z}\rceil+1$\\ 
&&or\\ 
&&$0\le z\le n-1$, $0<m<n-z$, $l=n-z$\\
\hline
(P3)&$([n+\alpha']^{m+z},[n-\alpha']^{l-m+z})$ & $0\le z\le n-1$, $0<m\le l<n-z$, $l<2m$, $n+l-m>\alpha (l+2z)$, $\alpha'=l-m+(2m-l)\alpha$\\
\hline
(P4)&$([n+\alpha'']^{l-m+z}),[n-\alpha'']^{m+z})$ & $0\le z\le n-1$, $0<m\le l<n-z$, $l<2m$, $n+m+2z\le \alpha (l+2z)<n+l+z$,\\ &&$\alpha''=m-(2m-l)\alpha$ \\
\hline 
(P5)&$([n]^{z+1},[n]^{m+z})$ & $0\le z\le n-1$, $2\le m\le n-z$\\
\hline
(P6)&$((2\alpha)^{l-\beta},(2\alpha-1)^{2z},(2\alpha-2)^{\beta})$ & $0\le z\le n-1$, $0<m\le l<n-z$, $l<2m$, $\max\{l-m-z,0\}\le \beta <\min\{m+z,l\}$\\ 
&&or\\
&&$0\le z\le n-1$, $0\le\beta<l<m\le n-z$\\
\hline
(P7)&$((2\alpha-1)^{2l-m+z-\beta},(2\alpha -2)^{2m-l+2z},(2\alpha-3)^{\beta-m-z})$ & $0\le z\le n-1$, $0< m\le l<n-z$, $l<2m$, $\max\{m+z,l\}\le \beta <\min\{2l-m+z,l+2z\}$\\
\hline
(P8)&$([n+\gamma']^{m+z-t},[n-\gamma']^{m+z+t})$ & $0\le z\le n-1$, $0<m<\frac{n-z}{2}$, $\lceil \frac{n+m+2z}{m+z}\rceil =\lceil \frac{n+m}{m+z}\rceil \ge 5 $,
$1\le t\le \min\{m,m-n+\lceil \frac{n-m}{m+z}\rceil (m+z)\}$, $\gamma'=m-\lceil \frac{n-z}{m+z}\rceil t$\\
\hline
(P9)&$([n+\gamma'']^{m+z+t},[n-\gamma'']^{m+z-t})$ &  $0\le z\le n-1$, $0<m<\frac{n-z}{2}$, $\lceil \frac{n-m}{m+z}\rceil \le \lceil \frac{n-z}{m+z}\rceil $,  $\lceil \frac{n+m}{m+z}\rceil\ge 4$, $t\ge 1$, $n+z+t\le \lceil \frac{n+m}{m+z}\rceil (m+z)\le n+2m+z-t$,\\ &&$\gamma''=m+\lceil \frac{n-m-2z}{m+z}\rceil t$\\
\hline
 \end{tabular}
    \caption{A complete list of the nilpotent orbits $\mathcal{O}_{\bfl}$ having a non-empty intersection with ${\mathcal N}(B)$, $B\in \mathcal{O}_{(n,n)}$. Here, 
    we use the notation $\alpha =\left\lceil \frac{n+z}{l+2z}\right\rceil$ and $\beta =\alpha (l+2z)-(n+z)$.}
    \label{tab:nn}
\end{table}
\end{thm}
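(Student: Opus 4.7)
The plan is to reduce immediately to $B=J_{(n)}\oplus J_{(n)}$, so that every $A\in\mathcal{C}(B)$ is a $2\times 2$ block matrix whose entries are polynomials in $N:=J_{(n)}$, i.e.\ upper-triangular Toeplitz $n\times n$ matrices. Writing $A=\bigl(\begin{smallmatrix} p(N) & q(N)\\ r(N) & s(N)\end{smallmatrix}\bigr)$, nilpotency of $A$ is controlled by the scalar $2\times 2$ matrix of constant terms $p(0),q(0),r(0),s(0)$, which must itself be nilpotent. Since the group $\mathcal{C}(B)^{\times}$ of invertible centralizer elements acts on $\mathcal{N}(B)$ by conjugation without altering Jordan types, the problem becomes a classification of $\mathcal{C}(B)^{\times}$-orbits on $\mathcal{N}(B)$ together with the computation of the Jordan type of a representative of each.

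Next, I would bring $A$ into a short list of normal forms. Either the constant-term matrix is zero (so all four blocks are divisible by $N$ and one factors a power $N^z$ out of the problem, producing the parameter $z$ of Table~\ref{tab:nn}), or one may assume after conjugation that it has a single nonzero entry in the $(1,2)$-position. Further conjugation by $f=\bigl(\begin{smallmatrix} u(N) & 0\\ 0 & v(N)\end{smallmatrix}\bigr)$ with $u,v$ units in $F[N]$ kills lower-order contributions in the remaining blocks and reduces each to a single monomial $c\cdot N^k$; the exponents $k$ become the parameters $m$ and $l$ of the table. With $A$ in monomial normal form, the partition $\bfl$ is determined from ranks via $\lambda_i'=\mathrm{rank}(A^{i-1})-\mathrm{rank}(A^i)$, and a direct computation shows that each block of $A^i$ is again a polynomial in $N$ whose order of vanishing is an explicit linear combination of $z,m,l,i$. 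The ceiling and floor expressions $\alpha=\lceil(n+z)/(l+2z)\rceil$, $\beta=\alpha(l+2z)-(n+z)$, and their analogues arise as integer parts of these exponents, producing the piecewise formulas in (P1)--(P7). Cases (P8)--(P9) involve an additional twisting parameter $t$ whose admissible values depend on the solvability in $F$ of an auxiliary polynomial equation of degree $\lceil(n-z)/(m+z)\rceil$; over $\mathbb{C}$ this equation always has a root in the required range, which is why these two families are unconditional there.

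For the converse direction, that no partition outside the table occurs, I would combine the known constraints $\lambda_1\le n+1$ for $\bfl\ne(2n)$ from \cite{Oblak2012}, the Basili lower bound on the number of parts from \cite{Basili2003}, and the recursive description of $\mathcal{Q}((n,n))$, together with rank inequalities derived directly from $AB=BA$, to confine $\bfl$ to one of the nine shapes above. The main obstacle I expect is the intricate book-keeping of the rank computations in the previous step: the partition of the parameter space into regions corresponding to (P2)--(P7) is forced by the relative ordering of ceilings $\lceil(n\pm m)/\cdot\rceil$, and each region requires its own short but delicate argument. The field-sensitive cases (P8)--(P9) are the hardest, because no uniform construction works for all $F$ and one must both exhibit the twisting parameter as a root of the auxiliary scalar equation over $\mathbb{C}$ and supply a matching obstruction that rules out partitions outside the stated range.
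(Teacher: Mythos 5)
Your overall framing — reduce to $B=J_n\oplus J_n$, identify $\mathcal{C}(B)$ with $M_2(F[X]/(X^n))$, extract the maximal power $B^z$, normalize by conjugation inside $\mathcal{C}(B)^{\times}$, and read off $\bfl$ from $\mathrm{rank}\,A^{i-1}-\mathrm{rank}\,A^{i}$ — is the same as the paper's. The step that fails is the claimed reduction of each block ``to a single monomial $c\cdot N^k$.'' After the normalization to $A=B^z\left[\begin{smallmatrix}0&1\\X^lp(X)&X^mq(X)\end{smallmatrix}\right]$, conjugating by $\mathrm{diag}(u(N),v(N))$ sends the off-diagonal entries to $u^{-1}vq$ and $uv^{-1}r$ while fixing the $(2,2)$ entry; preserving $b=1$ forces $u=v$, which then fixes $c$ and $d$ entirely. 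So $p(X)$ and $q(X)$ cannot be normalized away, and — this is the crux — the Jordan type of $A$ genuinely depends on them beyond the vanishing orders $l,m$: the ranks of $A^k$ are controlled by the recursion $r_{k+1}=qr_k+X^{l-2m}pr_{k-1}$, and when $l=2m$ a particular $r_j(X)$ may acquire extra divisibility by $X$ precisely when the constant terms satisfy an algebraic relation (in the paper, $h_j(u)=0$ with $u_j=-\tfrac{1}{4\cos^2(\pi/j)}$). This is exactly what produces (P8)--(P9) and their field-dependence. Under your monomial normal form the partition would be a function of $(z,l,m)$ alone, so those families could not arise at all; your own observation that (P8)--(P9) hinge on solvability of an auxiliary equation over $F$ is in tension with the normal form you assert.

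Two further gaps. First, after extracting the maximal $z$ the constant-term block of $C$ can still be zero while the next block $C_1$ is non-nilpotent; then neither branch of your dichotomy applies (you can neither factor out another $B$ nor conjugate the constant term to a rank-one nilpotent). The paper treats this case separately by diagonalizing $C$ over $F[X]/(X^n)$, and it is the sole source of the family (P5), e.g.\ $(6,3^2)$ for $n=6$. Second, for the converse you propose combining $\lambda_1\le n+1$, Basili's lower bound on the number of parts, and the description of $\mathcal{Q}((n,n))$; these constraints are far too coarse to confine $\bfl$ to nine parametrized families. In the paper no separate converse argument is needed: the case analysis of normal forms is exhaustive over all of $\mathcal{N}(B)$, and the exact partition is computed in each case, so completeness is automatic.
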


Note that the cases (P1)-(P9) are not complementary and that some of them coincide. Since there are nine different cases, it is expected that the proof will be technical. This is why we first give some auxiliary results in Section~\ref{sec:Preliminaries}, which will be used in Section~\ref{sec:proof}, completely dedicated to proving Theorem~\ref{main}. We complete the paper with some examples in Section~\ref{sec:examples}. In particular, we show that the partitions in the cases (P8)-(P9) may not occur when $F=\mathbb{Q}$.

\section{Preliminaries}\label{sec:Preliminaries}

Let $0_n$ and $I_n$ be the zero  and the identity matrix in $M_n(F)$, respectively, where we omit the subscripts when the order of the matrix is clear from the context.

Throughout the paper, let $B\in \mathcal{N}_{2n}$ be a nilpotent matrix with the Jordan canonical form $(n,n)$. There is no loss of generality if we assume that $B$ is in the Jordan canonical form, i.e., $B=J_n\oplus J_n$ where $J_n$ denotes the $n\times n$ Jordan block. A matrix $A$ then commutes with $B$ if and only if it is of the form
$$A=\left[
\begin{array}{cccccccc}a_0&a_1&\cdots&a_{n-1}&b_0&b_1&\cdots&b_{n-1}\\&\ddots&\ddots&\vdots&&\ddots&\ddots&\vdots\\&&\ddots&a_1&&&\ddots&b_1\\&&&a_0&&&&b_0\\c_0&c_1&\cdots&c_{n-1}&d_0&d_1&\cdots&d_{n-1}\\&\ddots&\ddots&\vdots&&\ddots&\ddots&\vdots\\&&\ddots&c_1&&&\ddots&d_1\\&&&c_0&&&&d_0
\end{array}
\right]$$
for some $a_i,b_i,c_i,d_i\in F$ (see e.g., \cite{MR2228089}), and with the omitted entries all equal to zero.  Apart from this, there are two other useful descriptions of the matrices $A\in {\mathcal N}(B)$: the Weyr form~\cite{de2022algebra, MR2849857} and the polynomial notation~\cite{Neubauer1999}. In general, the notation is complicated, but in the case when two Jordan blocks of $B$ are of the same size, the notation can be simplified to the following: 
\begin{itemize}
\item(Weyr form)
Using a suitable permutation of the basis of $F^{2n}$ we can see that the matrix  $J_n\oplus J_n$ is similar to the matrix
\begin{equation}\label{eq:B-Weyr}
\left[
\begin{array}{ccccc}0_2&I_2&0_2&\cdots&0_2\\&\ddots&\ddots&\ddots&\vdots\\&&\ddots&\ddots&0_2\\&&&\ddots&I_2\\&&&&0_2
\end{array}
\right],
\end{equation}
where all blocks are of size $2\times 2$. 
Such a matrix is said to be in the Weyr canonical form and it is very useful when the commutativity questions are considered (see \cite{MR2849857}). By abuse of the notation we will denote matrix \eqref{eq:B-Weyr} again by $B$, since it represents the same linear transformation. In this basis, the matrix of the linear transformation $A$ that commutes with $B$ has the form
\begin{equation}\label{eq:block_notation}
A=\left[
\begin{array}{cccc}A_0&A_1&\cdots&A_{n-1}\\&\ddots&\ddots&\vdots\\&&\ddots&A_1\\&&&A_0
\end{array}
\right],
\end{equation}
where $A_i=\left[
\begin{array}{cc}a_i&b_i\\c_i&d_i
\end{array}
\right]$ for each $i=0,1,\ldots ,n-1$. Clearly, $A$ is nilpotent if and only if $A_0$ is nilpotent, which was already observed in~\cite[Lemma 2.3]{Basili2003}.

\item(Polynomial notation)
Denote the polynomial ring $F[X]$ by $R$ and define the following polynomials in $R$: 
$$a(X)=a_{n-1}X^{n-1}+\cdots +a_1X+a_0,\quad b(X)=b_{n-1}X^{n-1}+\cdots +b_1X+b_0,$$
$$c(X)=c_{n-1}X^{n-1}+\cdots +c_1X+c_0,\quad d(X)=d_{n-1}X^{n-1}+\cdots +d_1X+d_0.$$
Then
\begin{equation}\label{eq:poly_notation}
A=\left[
\begin{array}{cc}a(J_n)&b(J_n)\\c(J_n)&d(J_n)
\end{array}
\right].
\end{equation}
The map
$$\left[
\begin{array}{cc}a(X)&b(X)\\c(X)&d(X)
\end{array}
\right]\mapsto 
\left[
\begin{array}{cc}a(J_n)&b(J_n)\\c(J_n)&d(J_n)
\end{array}
\right]$$
is a surjective homomorphism of algebras $M_2(R)\to \mathcal{C}(B)$ with the kernel consisting of all $2\times 2$ matrices with entries in the ideal $(X^n)$, therefore, the  algebra $\mathcal{C}(B)$ is isomorphic to $M_2(F[X]/(X^n))$ (see \cite{Neubauer1999}). For simplicity, we denote the quotient ring $F[X]/(X^n)$ by $S$. Again, we will use the same notation for the matrices from $M_2(S)$ and from $\mathcal{C}(B)$. We will also use the same notation for polynomials from $R$ and for the corresponding elements of the quotient ring $S$. This should not cause any confusion, since we explicitly state which ring is considered at any definition of a new polynomial. 
We also note that in the polynomial notation the matrix  $B$ is equal to $\left[
\begin{array}{cc}X&0\\0&X
\end{array}
\right]$.
\end{itemize}

First, we state a lemma that will help us understand the powers of structured polynomial matrices.

\begin{lemma}\label{A^k}
Let $C\in M_2(R)$ be an arbitrary matrix of the form $C=\left[
\begin{array}{cc}0&1\\c(X)&d(X)
\end{array}
\right]$ for some polynomials $c(X),d(X)\in R$. Define the sequence of polynomials $s_0(X),s_1(X),\ldots \in R$ by $$s_0(X):=0, s_1(X):=1, \text{ and } s_{k+1}(X):=d(X)s_k(X)+c(X)s_{k-1}(X)$$
for $k\ge 1$. Then for each positive integer $k$, the following equalities hold:
\begin{enumerate}
\renewcommand{\theenumi}{\Alph{enumi}}
    \item $C^k=\left[ \begin{array}{cc}c(X)s_{k-1}(X)&s_k(X)\\c(X)s_k(X)&s_{k+1}(X) \end{array}
    \right],$
\item $
    s_{k-1}(X)s_{k+1}(X)-s_k(X)^2=(-1)^k c(X)^{k-1},
$
\item $s_{2k-1}(X)=\sum\limits_{i=1}^k {2k-i-1\choose i-1}c(X)^{i-1} d(X)^{2k-2i},$
\item 
$s_{2k}(X)=\sum\limits_{i=0}^{k-1}{2k-i-1\choose i}c(X)^i d(X)^{2k-2i-1}$.
\end{enumerate}
(Here, the convention $0^0=1$ is used if the corresponding polynomial is zero.)
\end{lemma}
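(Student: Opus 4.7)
The approach is to prove the four parts in the order given. Part (A) is a direct induction on $k$; part (B) follows from (A) by taking determinants; and parts (C) and (D) are the odd and even halves of a single explicit formula for $s_m(X)$ proved by induction on $m$.

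For (A), the base case $k=1$ reduces to the assertion that $C$ itself has entries $c(X)s_0$, $s_1$, $c(X)s_1$, $s_2$ reading rows left to right, which holds because $s_0=0$, $s_1=1$, and $s_2 = d(X)s_1 + c(X)s_0 = d(X)$. For the inductive step, I would compute $C^{k+1} = C\cdot C^k$ using the assumed form of $C^k$; the $(2,1)$ entry becomes $c(c s_{k-1} + d\, s_k) = c\, s_{k+1}$ and the $(2,2)$ entry becomes $c\, s_k + d\, s_{k+1} = s_{k+2}$, both by the defining recurrence.

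For (B), taking determinants of the identity in (A) and using $\det C = -c(X)$ yields $c(X)\bigl(s_{k-1}s_{k+1} - s_k^2\bigr) = (-c(X))^k$. Since $R = F[X]$ is an integral domain, cancelling $c(X)$ gives the desired identity whenever $c(X) \neq 0$, and the case $c(X)=0$ (in which $s_m = d(X)^{m-1}$) is a direct check. Alternatively, one can avoid the case split by a parallel induction: the step follows from $s_k s_{k+2} - s_{k+1}^2 = s_k(d\,s_{k+1} + c\,s_k) - s_{k+1}(d\,s_k + c\,s_{k-1}) = -c(s_{k-1}s_{k+1} - s_k^2)$, together with $s_0 s_2 - s_1^2 = -1$.

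For (C) and (D), I would observe that both are specializations of the unified identity
$$s_m(X) = \sum_{i=0}^{\lfloor (m-1)/2\rfloor} \binom{m-i-1}{i}\, c(X)^i\, d(X)^{m-1-2i}$$
and prove this by induction on $m \geq 1$. The base cases $m=1,2$ are immediate. For the inductive step, substitute the formula into $s_{m+1} = d\, s_m + c\, s_{m-1}$; after shifting the summation index in the $c\, s_{m-1}$ term, the coefficient of $c^i d^{m-2i}$ becomes $\binom{m-i-1}{i} + \binom{m-i-1}{i-1}$, which equals $\binom{m-i}{i}$ by Pascal's rule, precisely matching the coefficient predicted for $s_{m+1}$. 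Setting $m = 2k-1$ and $m = 2k$ then recovers (C) and (D), respectively. The main obstacle is purely combinatorial bookkeeping: carefully aligning the two summation ranges so that Pascal's rule applies at every index, including the boundary terms where one of the binomial coefficients vanishes (using the convention $0^0=1$ in case $c(X)$ or $d(X)$ is zero).
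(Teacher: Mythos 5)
Your proposal is correct and follows essentially the same route as the paper: induction on $k$ for (A), the determinant identity $\det(C^k)=(\det C)^k$ with a separate check for $c(X)=0$ for (B), and induction on the recurrence for (C)--(D) (your unified formula in $m$ is just the paper's ``simultaneous induction'' on the odd and even cases written as one statement). Your optional Pascal-free-of-case-split induction for (B) is a small tidy refinement but not a different method.
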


\begin{proof}
The equality for $C^k$  in (A) can be proved by an easy induction using the recursive formula for the polynomials $s_k(X)$.
If $c(X)$ is non-zero, equality (B) follows immediately from the identity
$$(-1)^k c(X)^k=(\det C)^k=\det (C^k)=c(X)s_{k-1}(X)s_{k+1}(X)-c(X)s_k(X)^2.$$
 On the other hand, if $c(X)=0$, then $s_k(X)=d(X)^{k-1}$ for each $k\ge 1$, which completes the proof of part (B). Moreover, (C) and (D) can be shown by simultaneous induction as well.
\end{proof}

\begin{lemma}\label{A=B^zC}
 If $B\in \mathcal{N}_{2n}$ is a nilpotent matrix corresponding to the Jordan canonical form $(n,n)$ and $A \in {\mathcal N}(B)$ a non-zero matrix, then there exist a maximal non-negative integer $z$ and a matrix $C\in {\mathcal N}(B)$ such that $A=B^zC$. If $B$ is in the Weyr form and $A$ is of the form $(2)$, then $$C=\left[
\begin{array}{cccccc}
C_0&\cdots&C_{n-z-1}&0&\cdots&0\\
&\ddots&&\ddots&\ddots&\vdots\\
&&\ddots&&\ddots&0\\
&&&\ddots&&C_{n-z-1}\\
&&&&\ddots&\vdots\\
&&&&&C_0
\end{array}
\right]\in {\mathcal N}(B),$$
with $C_i$ of the form $C_i=\left[
\begin{array}{cc}
a_i&b_i\\c_i&d_i
\end{array}
\right]$ for $i \in \{0,1,\ldots ,n-z-1\}$.
\end{lemma}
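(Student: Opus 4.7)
The plan is to read off $z$ and $C$ directly from the block expansion of $A$ in the Weyr form, exploiting the criterion stated just after display (2) that a matrix of the form (2) is nilpotent if and only if its leading block $A_0$ is nilpotent. Writing $A$ as in (2) with blocks $A_0, A_1, \ldots, A_{n-1}$, I would let $z_0$ denote the smallest index with $A_{z_0} \neq 0$; this exists since $A \neq 0$. Since $A$ is nilpotent, the criterion gives that $A_0$ itself is nilpotent, so in particular whenever $z_0 = 0$ the block $A_{z_0} = A_0$ is already nilpotent.

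I would then define $z := z_0$ when $A_{z_0}$ is nilpotent, and $z := z_0 - 1$ otherwise (in the latter case $z_0 \geq 1$, so $z \geq 0$, and $A_z = 0$ is trivially nilpotent). Setting $C_i := A_{z+i}$ for $i = 0, 1, \ldots, n-z-1$ and taking $C$ to be the block matrix displayed in the statement (with all blocks beyond the $(n-z-1)$st superdiagonal equal to zero), a direct block computation using $(B^z)_{ik} = \delta_{k,i+z}\,I_2$ yields
\[
(B^z C)_{ij} = \begin{cases} C_{j-i-z} = A_{j-i}, & j \geq i + z, \\ 0 = A_{j-i}, & j < i + z, \end{cases}
\]
where the second line uses $A_{j-i} = 0$ whenever $j-i < z \leq z_0$. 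Hence $A = B^z C$. The matrix $C$ has the shape (2), so it commutes with $B$, and by the same nilpotency criterion $C$ is nilpotent iff $C_0 = A_z$ is nilpotent, which holds by the very choice of $z$. Thus $C \in \mathcal{N}(B)$.

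For maximality, I would suppose that $A = B^{z'} C'$ with $C' \in \mathcal{N}(B)$ and $z' > z$. The same block identity applied to $B^{z'}C'$ forces $A_i = 0$ for $i < z'$ and $A_{z'} = C'_0$. If $z' > z_0$, then $A_{z_0} = 0$ contradicts the minimality of $z_0$. If $z = z_0 - 1$, i.e., if $A_{z_0}$ is not nilpotent, then the only remaining possibility $z' = z_0$ would give $C'_0 = A_{z_0}$ non-nilpotent, contradicting $C' \in \mathcal{N}(B)$. In the other case $z = z_0$ we already have $z' \leq z_0 = z$. Hence $z' \leq z$ in every case, and $z$ is maximal.

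The only real subtlety is the two-case split in the definition of $z$, arising precisely because nilpotency of $A$ forces $A_0$ but not $A_{z_0}$ to be nilpotent; once the block identity $A = B^z C$ and the ``form (2) is nilpotent iff the leading block is nilpotent'' criterion are in hand, everything else is routine bookkeeping.
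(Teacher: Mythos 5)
Your proposal is correct and follows essentially the same route as the paper: both identify the smallest index $z'$ with $A_{z'}\neq 0$ and split into the two cases according to whether $A_{z'}$ is nilpotent, defining $C$ as the corresponding shifted block Toeplitz matrix. The only difference is that you additionally spell out the verification of $A=B^zC$ and of the maximality of $z$, which the paper leaves implicit.
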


\begin{proof}
Let $A \in {\mathcal N}(B)$ be as in \eqref{eq:block_notation} and let $z'$ be the smallest non-negative integer satisfying $A_{z'}\ne 0$. If $A_{z'}$ is nilpotent, we take $z=z'$ and $$C=\left[
\begin{array}{ccccccc}
A_{z'}&A_{z'+1}&\cdots&A_{n-1}&0&\cdots&0\\
&\ddots&\ddots&&\ddots&\ddots&\vdots\\
&&\ddots&\ddots&&\ddots&0\\
&&&\ddots&\ddots&&A_{n-1}\\
&&&&\ddots&\ddots&\vdots\\
&&&&&\ddots&A_{z'+1}\\
&&&&&&A_{z'}
\end{array}
\right].$$
If $A_{z'}$ is not nilpotent, then $z'\ge 1$, since $A$ is nilpotent. So we take $z=z'-1$ and $$C=\left[
\begin{array}{ccccccc}
0&A_{z'}&\cdots&A_{n-1}&0&\cdots&0\\
&\ddots&\ddots&&\ddots&\ddots&\vdots\\
&&\ddots&\ddots&&\ddots&0\\
&&&\ddots&\ddots&&A_{n-1}\\
&&&&\ddots&\ddots&\vdots\\
&&&&&\ddots&A_{z'}\\
&&&&&&0
\end{array}
\right].\hfill{\qedhere}$$ 
\end{proof}

\section{Proof of Theorem~\ref{main}}\label{sec:proof}

Let $B\in \mathcal{N}_{2n}$ be a nilpotent matrix with the Jordan canonical form $(n,n)$. By Lemma~\ref{A=B^zC}, any $A\in {\mathcal N}(B)$ is of the form  $A=B^zC$ for some non-negative integer $z$ and $C\in {\mathcal N}(B)$. If $A=0$, then the corresponding partition is $[2n]^{2n}$, which is listed in Table~\ref{tab:nn}(P1). In the rest of the proof we therefore assume that $A$ is non-zero, and hence we assume that $z$ is maximal possible such that $C$ is nilpotent. We will use the Weyr form and write $B$ as in \eqref{eq:B-Weyr} and $C,C_i$ as in Lemma~\ref{A=B^zC}, or in the polynomial notation 
and write $$ B=\left[
\begin{array}{cc}X&0\\0&X
\end{array}
\right], C=\left[
\begin{array}{cc}a(X)&b(X)\\c(X)&d(X)
\end{array}
\right], \text{ and } A= \left[
\begin{array}{cc}
X^za(X)&X^zb(X)\\
X^zc(X)&X^zd(X)
\end{array}
\right],$$
where $a(X)=a_0+\cdots +a_{n-z-1}X^{n-z-1}$, $b(X)=b_0+\cdots +b_{n-z-1}X^{n-z-1}$, $c(X)=c_0+\cdots +c_{n-z-1}X^{n-z-1},$ and $d(X)=d_0+\cdots +d_{n-z-1}X^{n-z-1}$ by Lemma~\ref{A=B^zC}. 

In the proof we consider two possibilities for $C_0$, whether $C_0$ is a non-zero or a zero matrix. 

\subsection{Case 1: \texorpdfstring{$C_0$}{TEXT} is non-zero.} 
Conjugating matrices $A$, $B$ and $C$ in the Weyr form by $G=\left[
\begin{array}{ccc}g\\&\ddots\\&&g
\end{array}
\right]\in GL_{2n}(F)$ for any $g\in GL_2(F)$  changes neither the matrix  $B$ nor the partitions corresponding to nilpotent orbits in which the matrices lie. On the other hand, each diagonal block of $G^{-1}CG$ is equal to $g^{-1}C_0g$. Since $C$ is nilpotent, $C_0$ is nilpotent as well~\cite[Lemma 2.3]{Basili2003}.  Therefore $C_0$ is similar to the nilpotent $2\times 2$ Jordan block over any field $F$, and hence we can assume that $C_0=\left[
\begin{array}{cc}0&1\\0&0
\end{array}
\right]$. In polynomial notation this means that $a(X),c(X),$ and $d(X)$ are divisible by $X$ and the constant term of $b(X)$ is 1. Choose now $G=\left[
\begin{array}{cc}b(X)&0\\-a(X)&1
\end{array}
\right]$, which is invertible in $M_2(S)$, since the constant term of $b(X)$ is non-zero. Then
$$G^{-1}AG=\left[
\begin{array}{cc}0&X^z\\X^z\left(b(X)c(X)-a(X)d(X)\right)&X^z\left(a(X)+d(X)\right)
\end{array}
\right],$$
so without any loss of generality we can assume that $a(X)=0$ and $b(X)=1$. Note that the assumption that $c(X)$ and $d(X)$ are both divisible by $X$ does not change after the above conjugation, and that after conjugation we still assume that the degrees of the polynomials $c(X)$ and $d(X)$ are bounded by $n-z-1$.

Let $l,m\le n-z$ be the largest positive integers such that $X^l$ divides $c(X)$ and $X^m$ divides $d(X)$ and let $p(X),q(X)\in R$ be unique polynomials satisfying $c(X)=X^lp(X)$ and $d(X)=X^mq(X)$. Polynomials $p(X)$ and $q(X)$ have non-zero constant terms if $l,m<n-z$, while for $l=n-z$ (respectively, $m=n-z$) the corresponding polynomial $p(X)$ (respectively, $q(X)$) is zero. 

We first consider the special case when $p(X)$ is zero.   If $p(X)=q(X)=0$, then $A^k=0$ for each $k\ge 2$ and $\mathrm{rank}\, A=n-z$, so $A$ corresponds to the partition $(2^{n-z},1^{2z})=[2n]^{n+z}$. This shows that  Table~\ref{tab:nn}(P1) is possible whenever we decompose $2n$ into an almost rectangular partition with at least $n$ parts. As noted in the introduction and proved in \cite{Britnell2011, Oblak2012}, the corresponding nilpotent orbit intersects every nilpotent centralizer.
If $p(X)=0$ and $q(X)$ is non-zero, then an easy induction shows that for each $k\ge 1$ we have
$$A^k=\left[
\begin{array}{cc}0&X^{kz+(k-1)m}q(X)^{k-1}\\0&X^{kz+km}q(X)^k
\end{array}
\right],$$
which has rank $n-kz-(k-1)m$ whenever $k\le \frac{n+m}{z+m}$, and 0 otherwise. Let $\alpha =\lceil \frac{n+m}{z+m}\rceil$ and $\beta =\alpha (z+m)-(n+m)$ (so that $0\le \beta <z+m$). Then
$$\mathrm{rank}\, A^k-\mathrm{rank}\, A^{k+1}=
\begin{cases}
n+z&k=0,\\
m+z&1\le k\le \alpha -2,\\
m+z-\beta&k=\alpha-1,\\
0&k\ge \alpha,
\end{cases}
$$
which implies that $A$ corresponds to the partition $(\alpha ^{m+z-\beta},(\alpha -1)^{\beta},1^{n-m})=([n+m]^{m+z},1^{n-m})$. These partitions belong to Table~\ref{tab:nn}(P2, $l=n-z$), and it follows from the above proof that for each of these partitions $\bfl$ there exists $A\in \mathcal{N}(B)\cap {\mathcal O}_{\bfl}$.

In the rest of Case 1 we assume that $p(X)$ is non-zero, or equivalently that $l<n-z$. We determine the partition corresponding to $A$ using the same argument as above. First we compute the powers of $A$. 
By  Lemma \ref{A^k}, we have
$$A^k=\left[
\begin{array}{cc}X^{kz}c(X)s_{k-1}(X)&X^{kz}s_k(X)\\X^{kz}c(X)s_k(X)&X^{kz}s_{k+1}(X)
\end{array}
\right]$$
for each positive integer $k$, where $s_0(X)=0$, $s_1(X)=1$, and
\begin{equation}\label{eq:s_k-recursion}
s_{k+1}(X)=d(X)s_k(X)+c(X)s_{k-1}(X)\quad \mathrm{for}\, k\ge 1.
\end{equation}
Moreover,
\begin{eqnarray}
s_{2k-1}(X)&=&\sum_{i=1}^k{2k-i-1\choose i-1}X^{2km+i(l-2m)-l}p(X)^{i-1}q(X)^{2k-2i}\quad \mathrm{and}\label{eq:s_k-odd}\\
s_{2k}(X)&=&\sum_{i=0}^{k-1}{2k-i-1\choose i}X^{2km+i(l-2m)-m}p(X)^iq(X)^{2k-2i-1}\quad \mathrm{for}\, k\ge 1\label{eq:s_k-even}.
\end{eqnarray}
Now we consider two subcases.

\subsubsection{\bf Case 1.1: $l<2m$} In this case, \eqref{eq:s_k-odd} and \eqref{eq:s_k-even} imply that the highest powers of $X$ dividing $s_{2k-1}(X)$ and $s_{2k}(X)$ are $X^{kl-l}$ and $X^{kl-l+m}$, respectively, unless $q(X)=0$ when $s_{2k}(X)=0$ for each $k\ge 1$. For each $k\ge 1$, we can write $s_{2k-1}(X)=X^{kl-l}r_{2k-1}(X)$ and $s_{2k}(X)=X^{kl-l+m}r_{2k}(X)$ for some polynomials $r_{2k-1}(X),r_{2k}(X)\in R$, where $r_{2k-1}(X)$ is uniquely determined and has non-zero constant term, while $r_{2k}(X)$ is such if $q(X)\ne 0$. Lemma~\ref{A^k} then implies that the equalities
\begin{eqnarray}
r_{2k-1}(X)r_{2k+1}(X)-X^{2m-l}r_{2k}(X)^2&=&p(X)^{2k-1},\label{eq:1.1det-odd}\\
X^{2m-l}r_{2k}(X)r_{2k+2}(X)-r_{2k+1}(X)^2&=&-p(X)^{2k}\label{eq:1.1det-even}
\end{eqnarray}
hold in $R$ for each $k\ge 1$. Furthermore, the last equality holds also for $k=0$ if we define $r_0(X)=0$.

Now we first consider the odd powers of $A$:
$$A^{2k'-1}=\left[
\begin{array}{cc}
X^{(2k'-1)z+k'l-l+m}p(X)r_{2k'-2}(X)&X^{(2k'-1)z+k'l-l}r_{2k'-1}(X)\\X^{(2k'-1)z+k'l}p(X)r_{2k'-1}(X)&X^{(2k'-1)z+k'l-l+m}r_{2k'}(X)
\end{array}
\right].$$
By definition, the polynomial $r_{2k'-1}(X)$ has non-zero constant term, so it is invertible in the quotient ring $S=R/(X^n)$. Let $r_{2k'-1}(X)^{-1}$ denote its inverse in $S$. Applying Gaussian elimination to $A^{2k'-1}$ and using \eqref{eq:1.1det-even} we now get
\begin{eqnarray*}
&&\left[
\begin{array}{cc}r_{2k'-1}(X)^{-1}&0\\-X^mr_{2k'-1}(X)^{-1}r_{2k'}(X)&1
\end{array}
\right]\cdot A^{2k'-1}\cdot \left[
\begin{array}{cc}r_{2k'-1}(X)&0\\-X^mp(X)r_{2k'-2}(X)&1
\end{array}
\right]=\\
&=&\left[
\begin{array}{cc}
0&X^{(2k'-1)z+k'l-l}\\
X^{(2k'-1)z+k'l}p(X)\Big(r_{2k'-1}(X)^2-X^{2m-l}r_{2k'-2}(X)r_{2k'}(X)\Big)&0
\end{array}
\right] = \\
&=&\left[
\begin{array}{cc}
0&X^{(2k'-1)z+k'l-l}\\
X^{(2k'-1)z+k'l}p(X)^{2k'-1}&0
\end{array}
\right].
\end{eqnarray*}
Matrices $\left[
\begin{array}{cc}r_{2k'-1}(X)^{-1}&0\\-X^mr_{2k'-1}(X)^{-1}r_{2k'}(X)&1
\end{array}
\right]$ and $ \left[
\begin{array}{cc}r_{2k'-1}(X)&0\\-X^mp(X)r_{2k'-2}(X)&1
\end{array}
\right]$ are invertible in $M_2(S)\cong \mathcal{C}(B)$, therefore,
\begin{eqnarray}
\mathrm{rank}\, A^{2k'-1}&=&\mathrm{rank} \left[
\begin{array}{cc}0&X^{(2k'-1)z+k'l-l}\\X^{(2k'-1)z+k'l}p(X)^{2k'-1}&0
\end{array}
\right]\notag\\
&=&\begin{cases}
2n-(2k'-1)(2z+l) &1\le k'\le \frac{n+z}{l+2z},\\
n-(2k'-1)z-k'l+l &\frac{n+z}{l+2z}\le k'\le \frac{n+l+z}{l+2z},\\
0 &\frac{n+l+z}{l+2z}\le k'.
\end{cases}\label{eq:1.1rank-odd}
\end{eqnarray}

For the further use we denote 
$$\alpha :=\left\lceil \frac{n+z}{l+2z}\right\rceil \text{ and } \beta :=\alpha (l+2z)-(n+z),$$ and in this case $0\le \beta < l+2z$ and $\alpha >1$. Moreover, $\frac{n+z}{l+2z}\le \frac{n+l+z}{l+2z}\le \frac{n+z}{l+2z}+1$, so
$$\left\lceil \frac{n+l+z}{l+2z}\right\rceil \in \{\alpha,\alpha+1\}.$$

Now we consider the even powers of $A$:
$$A^{2k'}=\left[
\begin{array}{cc}
X^{2k'z+k'l}p(X)r_{2k'-1}(X)&X^{2k'z+k'l-l+m}r_{2k'}(X)\\X^{2k'z+k'l+m}p(X)r_{2k'}(X)&X^{2k'z+k'l}r_{2k'+1}(X)
\end{array}
\right]$$
for $k'\ge 1$. To compute the rank of such a matrix we have to distinguish between two cases.

{\bf Case 1.1(a): $m\le l$.} In this case  $q(X)$ is 
non-zero as well. With the same argument as above we see that $r_{2k'}(X)$ is invertible in $S$ for $k'\ge 1$, and let $r_{2k'}(X)^{-1}$ be its inverse.  Using \eqref{eq:1.1det-odd}, for each $k'\ge 1$ we compute
\begin{eqnarray*}
&&\left[
\begin{array}{cc}r_{2k'}(X)^{-1}&0\\-X^{l-m}r_{2k'}(X)^{-1}r_{2k'+1}(X)&1
\end{array}
\right]\cdot A^{2k'}\cdot \left[
\begin{array}{cc}r_{2k'}(X)&0\\-X^{l-m}p(X)r_{2k'-1}(X)&1
\end{array}
\right]=\\
&=&\left[
\begin{array}{cc}
0&X^{2k'z+k'l-l+m}\\X^{2k'z+k'l+l-m}p(X)\Big(X^{2m-l}r_{2k'}(X)^2-r_{2k'-1}(X)r_{2k'+1}(X)\Big)&0
\end{array}
\right]=\\
&=&\left[
\begin{array}{cc}0&X^{2k'z+k'l-l+m}\\-X^{2k'z+k'l+l-m}p(X)^{2k'}&0
\end{array}
\right],
\end{eqnarray*}
therefore
\begin{eqnarray}
\mathrm{rank}\, A^{2k'}&=&\mathrm{rank}\, \left[
\begin{array}{cc}0&X^{2k'z+k'l-l+m}\\-X^{2k'z+k'l+l-m}p(X)^{2k'}&0
\end{array}
\right]\notag \\
&=&\begin{cases}
2n-2k'(2z+l) &k'\le \frac{n+m-l}{l+2z},\\
n-k'(2z+l)+l-m &\frac{n+m-l}{l+2z}\le k'\le \frac{n+l-m}{l+2z},\\
0 &\frac{n+l-m}{l+2z}\le k'.
\end{cases}\label{eq:1.1rank-even}
\end{eqnarray}
Moreover, it is obvious that equality \eqref{eq:1.1rank-even} holds for $k'=0$, too.

We now compare fractions in \eqref{eq:1.1rank-even} with $\alpha$. Since $\frac{n+z}{l+2z}-1\le \frac{n+m-l}{l+2z}\le \frac{n+z}{l+2z}$, we get $$\left\lceil \frac{n+m-l}{l+2z}\right\rceil\in\{\alpha -1,\alpha \}.$$
Similarly, $\frac{n+m-l}{l+2z}\le \frac{n+l-m}{l+2z}\le \frac{n+l+z}{l+2z}$, so
$$\left\lceil \frac{n+l-m}{l+2z}\right\rceil\in \{\alpha-1,\alpha , \alpha +1\}.$$
We consider various cases.

\begin{enumerate}
    \item[(i)] Assume first that $\lceil \frac{n+l-m}{l+2z}\rceil=\alpha +1$, or equivalently $\beta <l-m-z$, and hence in particular $l>m+z$. Then it is clear that $\lceil \frac{n+l+z}{l+2z}\rceil =\alpha+1$ as well. Furthermore, since $l<2m$, we have $\frac{n+l-m}{l+2z}-1\le \frac{n+m-l}{l+2z}$, so $\lceil \frac{n+m-l}{l+2z}\rceil =\alpha$. It follows from \eqref{eq:1.1rank-odd} and \eqref{eq:1.1rank-even} that
$$\mathrm{rank}\, A^{2k'}=\begin{cases}
2n-2k'(l+2z)&k'\le \alpha -1,\\
l-m-z-\beta&k'=\alpha,\\0&k'\ge \alpha+1
\end{cases}
$$
and 
$$\mathrm{rank}\, A^{2k'-1}=
\begin{cases}
2n-( 2k' -1)(l+2z )& k' \leq \alpha-1,\\
l-\beta &     k'=\alpha, \\
0,&k'\geq \alpha +1.
\end{cases}$$

In particular, $A^{2\alpha+1}=0$ and $\mathrm{rank}\, A^{2\alpha}=l-m-z-\beta$ which is strictly positive by assumption, so $2\alpha +1$ is the nilpotency index of $A$. Moreover,
$$\mathrm{rank}A^k-\mathrm{rank}\, A^{k+1}=
\begin{cases}
l+2z&k\le 2\alpha-3,\\
l+2z-\beta&k=2\alpha-2,\\
m+z&k=2\alpha-1,\\
l-m-z-\beta&k=2\alpha,\\
0&k\ge 2\alpha+1,
\end{cases}$$
which implies that $A$ corresponds to the partition Table~\ref{tab:nn}(P3)
\begin{eqnarray*}
&&((2\alpha +1)^{l-m-z-\beta},(2\alpha)^{2m-l+2z+\beta},(2\alpha-1)^{l-m+z-\beta},(2\alpha-2)^{\beta})\\
&=&([n+l-m+\alpha (2m-l)]^{m+z},[n+m-l-\alpha(2m-l)]^{l-m+z}),
\end{eqnarray*}
where $0\le z\le n-1$, $0<m\le l<n-z$, $l<2m$, $\alpha >1$ and $0\le \beta <l-m-z$, where $0<m\le l$ is redundant. We note that $n+m-l-\alpha(2m-l)$ is indeed positive for $\alpha \ge 1$, as it is equal to $\alpha (2l-2m+2z)+m-l-z-\beta \ge l-m+z-\beta>0$. 

\item[(ii)] Assume that $\lceil \frac{n+l-m}{l+2z}\rceil=\alpha$, or equivalently  $l-m-z\le \beta <2l-m+z$. We know that $\lceil \frac{n+m-l}{l+2z}\rceil \in\{\alpha-1,\alpha\}$ and $\lceil \frac{n+l+z}{l+2z}\rceil\in \{\alpha ,\alpha +1\}$. We consider all options.

\begin{itemize}

\item
Assume $\lceil \frac{n+m-l}{l+2z}\rceil =\alpha$ and $\lceil \frac{n+l+z}{l+2z}\rceil=\alpha +1$, which is equivalent to $m+z>\beta$ and $l>\beta$. In this case
$$\mathrm{rank}\, A^{2k'}=\begin{cases} 2n-2k'(l+2z)&k'\le \alpha -1,\\0&k'\ge \alpha
\end{cases}$$
and
$$\mathrm{rank}\, A^{2k'-1}=\begin{cases}
2n-(  2k' -1)(l+2z )      & k' \leq \alpha-1,\\
l-\beta &     k'=\alpha, \\
0&k'\ge \alpha+1.
\end{cases}$$
We therefore get
$$\mathrm{rank}\, A^k-\mathrm{rank}\, A^{k+1}=\begin{cases}
l+2z&k\le 2\alpha-3,\\l+2z-\beta&k=2\alpha-2,\\l-\beta&k=2\alpha -1,\\0&k\ge 2\alpha,
\end{cases}$$
so $A \in {\mathcal O}_{\bfl}$, where
$$\bfl = ((2\alpha)^{l-\beta},(2\alpha-1)^{2z},(2\alpha-2)^{\beta}),$$
 corresponds to the partition Table~\ref{tab:nn}(P6), and $0\le z\le n-1$, $0<m\le l<n-z$, $l<2m$ and $\max\{l-m-z,0\}\le \beta <\min\{m+z,l\}$. (Again, the condition $\beta <2l-m+z$ is redundant.) 

\item
Assume $\lceil \frac{n+m-l}{l+2z}\rceil =\lceil \frac{n+l+z}{l+2z}\rceil=\alpha$, which is equivalent to $l\le \beta <m+z$. Then
$$\mathrm{rank}\, A^{2k'}=
\begin{cases}
2n-2k'(l+2z)&k'\le \alpha-1,\\
0&k'\ge \alpha,
\end{cases}$$
$$\mathrm{rank}\, A^{2k'-1}=\begin{cases}
2n-(  2k' -1)(l+2z )      & k' \leq \alpha-1,\\
0&k'\geq \alpha,
\end{cases}$$
and 
$$\mathrm{rank}\, A^k-\mathrm{rank}\, A^{k+1}=\begin{cases}
l+2z&k\le 2\alpha-3,\\  2l+2z-2\beta&k=2\alpha-2,\\0&k\ge 2\alpha-1.
\end{cases}$$
Consequently, $A$ corresponds to the partition Table~\ref{tab:nn}(P1)
$$\left( (2\alpha-1)^{2l+2z-2\beta},(2\alpha-2)^{2\beta-l}\right)=[2n]^{l+2z},$$
where $0\le z\le n-1$, $0< m\le l<n-z$, $l<2m$ and $l\le \beta <m+z$ (while the conditions $l-m-z\le \beta <2l-m+z$ and $\beta<l+2z$ are redundant).

\item

Assume $\lceil \frac{n+m-l}{l+2z}\rceil =\alpha -1$ and $\lceil \frac{n+l+z}{l+2z}\rceil =\alpha +1$, which is equivalent to $m+z\le \beta <l$. It follows that 
$$\mathrm{rank}\, A^{2k'}=\begin{cases}
2n-2k'(l+2z)&k'\le \alpha -2,\\ 
2l-m+z-\beta	&k'=\alpha -1,\\0&k'\ge \alpha,
\end{cases}$$
$$\mathrm{rank}\, A^{2k'-1}=\begin{cases}
2n-(2k'-1)(l+2z)&k'\le \alpha -1,\\ 
l-\beta&k'=\alpha,\\
0&k'\ge \alpha+1
\end{cases}$$
and
$$\mathrm{rank}\, A^k-\mathrm{rank}\, A^{k+1}=\begin{cases}
l+2z&k\le 2\alpha-4,\\
l+m+3z-\beta&k=2\alpha-3,\\
l-m+z&k=2\alpha-2,\\
l-\beta&k=2\alpha-1,\\
0&k\ge 2\alpha.
\end{cases}$$
Consequently, $A$ corresponds to the partition Table~\ref{tab:nn}(P4)
\begin{eqnarray*}
&&((2\alpha)^{l-\beta},(2\alpha-1)^{z-m+\beta},(2\alpha-2)^{2m+2z-\beta},(2\alpha-3)^{\beta-m-z})\\
&=&([n+m-\alpha (2m-l)]^{l-m+z},[n-m+\alpha(2m-l)]^{m+z}),
\end{eqnarray*}
where $0\le z\le n-1$, $0<m\le l<n-z$, $l<2m$ and $m+z\le \beta <l$ (while the conditions $l-m-z\le \beta <2l-m+z$ and also $m\le l$ are redundant).

\item

It remains to consider the case when $\lceil \frac{n+m-l}{l+2z}\rceil =\alpha -1$ and $\lceil \frac{n+l+z}{l+2z}\rceil =\alpha$, which is equivalent to $\beta \ge \max\{m+z,l\}$. 
Now we have
$$\mathrm{rank}\, A^{2k'}=\begin{cases}
2n-2k'(l+2z)&k'\le \alpha -2,\\  
2l-m+z-\beta	&k'=\alpha -1,\\
0&k'\ge \alpha,
\end{cases}$$
$$\mathrm{rank}\, A^{2k'-1}=\begin{cases}
2n-(  2k' -1)(l+2z )      & k' \leq \alpha-1,\\
0&k'\geq \alpha,
\end{cases}$$
and 
$$\mathrm{rank}\, A^k-\mathrm{rank}\, A^{k+1}=\begin{cases}
l+2z&k\le 2\alpha-4,\\  
l+m+3z-\beta&k=2\alpha-3,
\\2l-m+z-\beta&k=2\alpha-2,
\\0&k\ge 2\alpha -1.
\end{cases}$$

Hence, $A$ corresponds to the partition Table~\ref{tab:nn}(P7)
$$\left( (2\alpha-1)^{2l-m+z-\beta},(2\alpha -2)^{2m-l+2z},(2\alpha-3)^{\beta-m-z}\right),$$
where $0\le z\le n-1$, $0<m\le l<n-z$, $l<2m$ and  $\max\{m+z,l\}\le \beta <\min\{2l-m+z,l+2z\}$ (and the condition $\beta \ge l-m-z$ is redundant).

\end{itemize}

\item[(iii)] Assume now that $\lceil \frac{n+l-m}{l+2z}\rceil=\alpha -1$, or equivalently  $\beta \ge 2l-m+z$, so in particular we have $z>l-m$, since $\beta <l+2z$. 
Since $l\ge m$, we get also $\lceil \frac{n+m-l}{l+2z}\rceil=\alpha-1$. 
Moreover, the conditions $m\le l$ and $z\ge 0$ imply $2l-m+z\ge l$, so $\beta \ge l$, which is equivalent to 
$\lceil \frac{n+l+z}{l+2z}\rceil =\alpha$. 
We thus get
$$\mathrm{rank}\, A^{2k'}=\begin{cases}
2n-2k'(l+2z)&k'\le \alpha -2,\\
0&k'\ge \alpha-1,
\end{cases}
$$
$$\mathrm{rank}\, A^{2k'-1}=
\begin{cases}
2n-(2k'-1)(l+2z)&k'\le \alpha -1,\\
0&k'\ge \alpha,
\end{cases}$$
and therefore
$$\mathrm{rank}\, A^k-\mathrm{rank}\, A^{k+1}=
\begin{cases}
l+2z&k\le 2\alpha -4,\\
2n-(2\alpha -3)(l+2z)&k=2\alpha -3,\\
0&k\ge2\alpha -2.
\end{cases}
$$
It follows that $A \in {\mathcal O}_{\bfl}$, where
$$\bfl=((2\alpha -2)^{2n-(2\alpha -3)(l+2z)},(2\alpha -3)^{(2\alpha -2)(l+2z)-2n})=[2n]^{l+2z}$$ 
as in Table~\ref{tab:nn}(P1), for $0\le z\le n-1$, $0<m\le l<n-z$, $l<2m$ and $2l-m+z\le \beta <l+2z$.
\end{enumerate}

At the end of Case 1.1(a) we observe that it immediately follows from the above proof that for each of the partitions $\bfl$ obtained in (i)-(iii) there exists $A\in \mathcal{N}(B)\cap {\mathcal O}_{\bfl}$.

Note that in the special case when $l=1$, the condition (a) forces $m=1$. Moreover, the option (i) above and the last two cases in (ii) are not possible, and in all other cases we get the partition $[2n]^{2z+1}$.  Hence, in particular we get all odd cases in the first set of partitions listed in the theorem, except $[2n]^{2n-1}$, since $1<n-z$. However, the existence of this partition was shown at the beginning of the proof in the case when $p(X)=q(X)=0$. The existence of all even cases will be proved in Case 2 below, but we note here that we get almost all of them also from the partitions above in the case $l=2$. Indeed, in this case the condition $m\le l<2m$ implies $m=2$, again the option (i) and the last two cases in (ii) are not possible, and the only case when we get a partition different from $[2n]^{2z+2}$ is when $\beta=1$. Note also that for $n\ge 2$ the partition $[2n]^{2n-2}$ is not covered by the above cases if $l=2$, but it has been established when we considered the case $p(X)=q(X)=0$.

{\bf Case 1.1(b): $n-z\ge m>l$}. We proceed as in Case 1.1(a). As above, the polynomial $r_{2k'+1}(X)$ has non-zero constant term, therefore it is invertible in $S=R/(X^n)$ for each $k'\ge 0$. Let $r_{2k'+1}(X)^{-1}$ be its inverse in $S$. For $k'\ge 1$ we compute, using \eqref{eq:1.1det-odd}, that
\begin{eqnarray*}
&&\left[
\begin{array}{cc}1&-X^{m-l}r_{2k'+1}(X)^{-1}r_{2k'}(X)\\0&r_{2k'+1}(X)^{-1}
\end{array}
\right]\cdot A^{2k'}\cdot \left[
\begin{array}{cc}r_{2k'+1}(X)&0\\X^mp(X)r_{2k'}(X)&1
\end{array}
\right]=\\
&=&\left[
\begin{array}{cc}
X^{2k'z+k'l}p(X)\Big(r_{2k'-1}(X)r_{2k'+1}(X)-X^{2m-l}r_{2k'}(X)^2\Big)&0\\0&X^{2k'z+k'l}
\end{array}
\right]=\\
&=&\left[
\begin{array}{cc}X^{2k'z+k'l}p(X)^{2k'}&0\\0&X^{2k'z+k'l}
\end{array}
\right],
\end{eqnarray*}
so
\begin{eqnarray}
\mathrm{rank}\, A^{2k'}&=&\mathrm{rank}\,\left[
\begin{array}{cc}X^{2k'z+k'l}p(X)^{2k'}&0\\0&X^{2k'z+k'l}
\end{array}
\right]=\notag \\
&=&\left\{
\begin{array}{ll}
2n-2k'(l+2z)&k'\le \frac{n}{l+2z},\\
0&k'\ge \frac{n}{l+2z},
\end{array}
\right.\label{eq:1.1(b)rank-even}
\end{eqnarray}
and clearly $\mathrm{rank}\, A^0=2n$, so \eqref{eq:1.1(b)rank-even} holds for $k'=0$. Since $\frac{n+z}{l+2z}-1\le \frac{n}{l+2z}\le \frac{n+z}{l+2z}$, we get
$$\left\lceil \frac{n}{l+2z}\right\rceil\in \{\alpha -1,\alpha\}.$$
On the other hand, we know that $\lceil \frac{n+l+z}{l+2z}\rceil \in \{\alpha ,\alpha +1\}$. Moreover, $\frac{n+l+z}{l+2z}-\frac{n}{l+2z}\le 1$, therefore $\lceil \frac{n+l+z}{l+2z}\rceil -\lceil \frac{n}{l+2z}\rceil \in \{0,1\}$.  We consider all options.

\begin{enumerate}
\item[(i)]
Assume first that $\lceil \frac{n+l+z}{l+2z}\rceil =\alpha +1$, which is equivalent to $\beta <l$. Then it is clear that $\lceil \frac{n}{l+2z}\rceil =\alpha$. Furthermore, 
we get from \eqref{eq:1.1rank-odd} and \eqref{eq:1.1(b)rank-even} that
$$\mathrm{rank}\, A^{2k'}=\left\{
\begin{array}{ll}2n-2k'(l+2z)&k'\le \alpha-1,\\0&k'\ge \alpha,
\end{array}
\right.
$$
$$\mathrm{rank}\, A^{2k'-1}=\left\{
\begin{array}{ll}2n-(2k'-1)(l+2z)&k'\le \alpha-1,\\
l-\beta&k'=\alpha,\\
0&k'\ge \alpha+1,
\end{array}\right.
$$
and
$$\mathrm{rank}\, A^k-\mathrm{rank}\, A^{k+1}=
\left\{
\begin{array}{ll}l+2z&k\le 2\alpha-3,\\
l+2z-\beta&k=2\alpha-2,\\
l-\beta&k=2\alpha-1,\\
0&k\ge 2\alpha,
\end{array}
\right.
$$
and the corresponding partition is Table~\ref{tab:nn}(P6)
$$\left( (2\alpha)^{l-\beta},(2\alpha-1)^{2z},(2\alpha-2)^{\beta}\right),$$ for $0\le z\le n-1$, $0< l<m\le n-z$ and $0\le \beta <l$. 

\item[(ii)]
Assume now that $\lceil \frac{n+l+z}{l+2z}\rceil =\lceil \frac{n}{l+2z}\rceil =\alpha$, which is equivalent to $l\le \beta <l+z$. 
It follows from \eqref{eq:1.1rank-odd} and \eqref{eq:1.1(b)rank-even} that
$$\mathrm{rank}\, A^{2k'}=\left\{
\begin{array}{ll}2n-2k'(l+2z)&k'\le \alpha-1,\\0&k'\ge \alpha,
\end{array}
\right.
$$
$$\mathrm{rank}\, A^{2k'-1}=\left\{
\begin{array}{ll}2n-(2k'-1)(l+2z)&k'\le \alpha-1,\\0&k'\ge \alpha,
\end{array}\right.
$$
and 
$$\mathrm{rank}\, A^k-\mathrm{rank}\, A^{k+1}=
\left\{
\begin{array}{ll}l+2z&k\le 2\alpha-3,\\2n-2(\alpha -1)(l+2z)&k=2\alpha-2,\\0&k\ge 2\alpha-1,
\end{array}
\right.
$$
so the corresponding partition is  Table~\ref{tab:nn}(P1) $$\left( (2\alpha-1)^{2n-2(\alpha-1)(l+2z)}, (2\alpha-2)^{-2n+(2\alpha -1)(l+2z)}\right)=[2n]^{l+2z},$$ where $0\le z\le n-1$, $0<l<m\le n-z$ and $l\le \beta <l+z$.

\item[(iii)]
It remains to consider the case when  $\lceil \frac{n}{l+2z}\rceil =\alpha -1$ (and hence $\lceil \frac{n+l+z}{l+2z}\rceil =\alpha$), which is equivalent to $\beta \ge l+z$. In this case 
 we have 
$$\mathrm{rank}\, A^{2k'}=\left\{
\begin{array}{ll}2n-2k'(l+2z)&k'\le \alpha-2,\\0&k'\ge \alpha-1,
\end{array}
\right.
$$
$$\mathrm{rank}\, A^{2k'-1}=\left\{
\begin{array}{ll}2n-(2k'-1)(l+2z)&k'\le \alpha-1,\\0&k'\ge \alpha,
\end{array}\right.
$$
and 
$$\mathrm{rank}\, A^k-\mathrm{rank}\, A^{k+1}=
\left\{
\begin{array}{ll}l+2z&k\le 2\alpha-4,\\2n-(2\alpha -3)(l+2z)&k=2\alpha-3,\\0&k\ge 2\alpha-2,
\end{array}
\right. 
$$
and the corresponding partition is Table~\ref{tab:nn}(P1)
$$\left( (2\alpha-2)^{2n-(2\alpha -3)(l+2z)}, (2\alpha-3)^{-2n+(2\alpha -2)(l+2z)}\right)=[2n]^{l+2z},$$ where $0\le z\le n-1$, $0<l<m\le n-z$ and $\beta \ge l+z$.
\end{enumerate}

As before, it follows from the above proof that for each of the partitions obtained in (i)-(iii) there indeed exists $A\in \mathcal{N}(B)$ that corresponds to the partition.

\subsubsection{\bf Case 1.2:  $l\ge 2m$} Then $q(X)$ has non-zero constant term. Moreover, it follows from \eqref{eq:s_k-odd} and \eqref{eq:s_k-even} that, for each $k\ge 1$, the polynomial $s_k(X)$ is divisible by $X^{km-m}$. For each $k\ge 1$, we write $s_k(X)=X^{km-m}r_k(X)$ for some uniquely determined polynomial $r_k(X)\in R$, and let $r_0(X)=0$. Then
\begin{equation}\label{eq:r_k-recursion}
r_{k+1}(X)=q(X)r_k(X)+X^{l-2m}p(X)r_{k-1}(X)
\end{equation}
for $k\ge 1$ by \eqref{eq:s_k-recursion}. Furthermore, Lemma~\ref{A^k} implies that
$$
A^k=\left[
\begin{array}{cc}
X^{kz+km-2m+l}p(X)r_{k-1}(X)&X^{kz+km-m}r_k(X)\\X^{kz+km-m+l}p(X)r_k(X)&X^{kz+km}r_{k+1}(X)
\end{array}
\right]
$$
for $k\ge 1$, and
\begin{equation}\label{eq:1.2det}
r_{k-1}(X)r_{k+1}(X)-r_k(X)^2=(-1)^kX^{(k-1)(l-2m)}p(X)^{k-1}
\end{equation}
holds in $R$ for each $k\ge 2$. Clearly, \eqref{eq:1.2det} holds also for $k=1$. If $l>2m$, then it is also clear from \eqref{eq:s_k-odd} and \eqref{eq:s_k-even} that $r_k(X)$ has non-zero constant term for each $k\ge 1$.

 As we will see below, it is convenient to define the numbers
$$\alpha =\left\lceil\frac{n+m}{z+m}\right\rceil, \beta =\alpha (z+m)-(n+m), \gamma =\left\lceil \frac{n-m}{z+l-m}\right\rceil,  \mathrm{ and }\, \delta =\gamma(z+l-m)-(n-m).$$
Then $0\le \beta <z+m$ and $0\le \delta < z+l-m$. Moreover, since $l\ge 2m$, we get $z+l-m\ge z+m$, hence $\frac{n+m}{z+m}\ge \frac{n-m}{z+l-m}$, and consequently $\alpha\geq \gamma.$ Furthermore, $l<n-z$ implies $\gamma >1$.

Suppose first that $k\ge 1$ is such that $r_k(X)$ is not divisible by $X$. (As noted above, this happens always if $l>2m$.) Then it is invertible in $S=R/(X^n)$, and let $r_k(X)^{-1}$ be its inverse in $S$. Using \eqref{eq:1.2det} we now compute
\begin{eqnarray*}
&&\left[
\begin{array}{cc}
r_k(X)^{-1}&0\\-X^mr_k(X)^{-1}r_{k+1}(X)&1
\end{array}
\right]\cdot A^k\cdot \left[
\begin{array}{cc}r_k(X)&0\\-X^{l-m}p(X)r_{k-1}(X)&1
\end{array}
\right]=\\
&=&\left[
\begin{array}{cc}
0&X^{kz+km-m}\\X^{kz+km+l-m}p(X)\Big(r_k(X)^2-r_{k-1}(X)r_{k+1}(X)\Big)&0
\end{array}
\right]=\\
&=&\left[
\begin{array}{cc}
0&X^{kz+km-m}\\(-1)^{k+1}X^{kz+kl-km+m}p(X)^k&0
\end{array}
\right].
\end{eqnarray*}
We can therefore compute
\begin{eqnarray*}\label{eq:14}
\mathrm{rank}\, A^{k}&=&\mathrm{rank}\,
\left[
\begin{array}{cc} 0 & X^{kz+km-m}\\ (-1)^{k+1}X^{kz+kl-km+m}p(X)^{k} &    0
\end{array} \right]\notag \\
&=&
\begin{cases}
2n-k(l+2z)& k\leq \frac{n-m}{z+l-m},\\
n+m-k(m+z)&   \frac{n-m}{z+l-m} \leq k\le \frac{n+m}{z+m}, \\
0&  k\ge  \frac{n+m}{z+m}.
\end{cases}
\end{eqnarray*}
If $k\ge 1$ and $r_k(X)$ is not divisible by $X$, 
we therefore get
\begin{equation}\label{eq:1.2rankA^k-not_divisible}
\mathrm{rank}\, A^k=\left\{
\begin{array}{ll}2n-k(l+2z)&k\le \gamma-1,\\
n+m-k(m+z)&\gamma\le k\le \alpha -1,\\
0&k\ge \alpha,
\end{array}
\right.
\end{equation}
and it is clear that the above holds also for $k=0$.

Suppose now that $X$ divides $r_k(X)$ for some $k\ge 1$. We already know that this forces $l=2m$, hence $\gamma =\lceil \frac{n-m}{z+m}\rceil$, and in particular $\alpha \le \gamma +2$. Moreover, if $X$ divides $r_{k-1}(X)$, too, then \eqref{eq:r_k-recursion} implies that $r_j(X)$ is divisible by $X$ for any $j$. (Recall that $p(X)$ is not divisible by $X$.) However, this contradicts $r_1(X)=1$. It follows that no two consecutive polynomials $r_j(X)$ and $r_{j-1}(X)$ are divisible by $X$.

Let $t$ be the largest positive integer smaller than or equal to $m+1$ such that $X^t$ divides $r_k(X)$, and let $\widetilde{r_k}(X)\in R$ be the unique polynomial satisfying $r_k(X)=X^t\widetilde{r_k}(X)$. If $t\le m$, then the polynomial $\widetilde{r_k}(X)$ has non-zero constant term, so it is invertible in $S=R/(X^n)$. Let $\widetilde{r_k}(X)^{-1}$ be its inverse in $S$. For $t\le m$, we get by \eqref{eq:1.2det} that
\begin{eqnarray*}
&&\left[
\begin{array}{cc}\widetilde{r_k}(X)^{-1}&0\\-X^{m-t}\widetilde{r_k}(X)^{-1}r_{k+1}(X)&1
\end{array}
\right]
\cdot A^k\cdot \left[
\begin{array}{cc}\widetilde{r_k}(X)&0\\-X^{m-t}p(X)r_{k-1}(X)&1
\end{array}
\right]=\\
&=&\left[
\begin{array}{cc}0&X^{kz+km-m+t}\\X^{kz+km+m-t}p(X)\Big(r_k(X)^2-r_{k-1}(X)r_{k+1}(X)\Big)&0
\end{array}
\right]=\\
&=&\left[
\begin{array}{cc}0&X^{kz+km-m+t}\\(-1)^{k+1}X^{kz+km+m-t}p(X)^k&0
\end{array}
\right],
\end{eqnarray*}
so
\begin{equation}\label{eq:1.2rankA^k-divisible}
\mathrm{rank}\, A^k=
\begin{cases}
2n-2k(m+z)&k\le \frac{n-m+t}{m+z},\\
n+m-t-k(m+z)&\frac{n-m+t}{m+z}\le k\le \frac{n+m-t}{m+z},\\
0&k\ge \frac{n+m-t}{m+z}.
\end{cases}
\end{equation}
On the other hand, for $t=m+1$ we use the fact that $r_{k+1}(X)$ has non-zero constant term, which was shown above. Hence $r_{k+1}(X)$ is invertible in $S$ and we obtain
\begin{eqnarray*}
&&\left[
\begin{array}{cc}1&-X\widetilde{r_k}(X)r_{k+1}(X)^{-1}\\0&r_{k+1}(X)^{-1}
\end{array}
\right]\cdot A^k\cdot \left[
\begin{array}{cc}r_{k+1}(X)&0\\-X^mp(X)r_k(X)&1
\end{array}
\right]=\\
&=&\left[
\begin{array}{cc}X^{kz+km}p(X)\Big(r_{k-1}(X)r_{k+1}(X)-r_k(X)^2\Big)&0\\0&X^{kz+km}
\end{array}
\right]=\\
&=&\left[
\begin{array}{cc}X^{kz+km}p(X)^k&0\\0&X^{kz+km}
\end{array},
\right]
\end{eqnarray*}
hence
$$\mathrm{rank}\, A^k=
\begin{cases}
2n-2k(m+z)&k\le \frac{n}{m+z},\\
0&k\ge \frac{n}{m+z},
\end{cases}$$
which is a special case of \eqref{eq:1.2rankA^k-divisible} when $t=m$. If $t=m+1$, we therefore get no additional partitions corresponding to orbits intersecting $\mathcal{N}(B)$, so we do  not consider this case below.

Suppose that $k\le \gamma-1$. If $r_k(X)$ is not divisible by $X$, then \eqref{eq:1.2rankA^k-not_divisible} shows that $\mathrm{rank}\,A^k=2n-k(l+2z)$. On the other hand, if $X^t$ is the highest power of $X$ that divides $r_k(X)$ for some $1\le t\le m$, then $l=2m$ and it is clear that $k\le \frac{k-m+t}{m+z}$, so \eqref{eq:1.2rankA^k-divisible} implies that again we have $\mathrm{rank}\, A^k=2n-2k(m+z)=2n-k(l+2z)$. Similarly, the rank of $A^k$ in \eqref{eq:1.2rankA^k-divisible} is zero if $k\ge \alpha$. It follows that the ranks of $A^k$ in \eqref{eq:1.2rankA^k-not_divisible} and in \eqref{eq:1.2rankA^k-divisible} are different only if $\gamma \le k\le \alpha -1$. Now we consider two cases.

{\bf Case 1.2(a):  $r_k(X)$ is not divisible by $X$ for any $k\in \{\gamma,\ldots,\alpha-1\}$.} 
As noted above, this case in particular includes the case when $l>2m$. We consider various cases with respect to $\alpha$ and $\gamma$:

\begin{enumerate}
\item[(i)] If $\alpha \ge \gamma +2$, then \eqref{eq:1.2rankA^k-not_divisible} implies
$$\mathrm{rank}\, A^k-\mathrm{rank}\, A^{k+1}=\left\{
\begin{array}{ll}l+2z&k\le \gamma -2,\\
l+2z-\delta&k=\gamma-1,\\
m+z&\gamma\le k\le\alpha-2,\\
m+z-\beta&k=\alpha-1,\\
0&k\ge \alpha,
\end{array}
\right.$$
hence the matrix $A$  belongs to  
${\mathcal O}_\bfl$, where  $$\bfl=(\alpha^{m+z-\beta},(\alpha-1)^{\beta},\gamma^{l-m+z-\delta},(\gamma-1)^{\delta})=([n+m]^{m+z},[n-m]^{l-m+z}),$$ 
as in Table~\ref{tab:nn}(P2), for $0\le z\le n-1$, $0<2m\le l<n-z$ and $\alpha \ge \gamma+2$.

\item[(ii)] If $\alpha =\gamma +1$, then we again use \eqref{eq:1.2rankA^k-not_divisible} to get
$$\mathrm{rank}\, A^k-\mathrm{rank}\, A^{k+1}=
\begin{cases}
l+2z&k\le \gamma -2,\\
l+2z-\delta&k=\gamma-1,\\
m+z-\beta&k=\gamma,\\
0&k\ge \gamma +1.
\end{cases}$$
It follows that $A \in {\mathcal O}_\bfl$, where  $$\bfl=((\gamma +1)^{m+z-\beta},\gamma ^{l-m+z+\beta-\delta},(\gamma -1)^{\delta})=([n+m]^{m+z},[n-m]^{l-m+z}),$$ as in Table~\ref{tab:nn}(P2), for $0\le z\le n-1$, $0<2m\le l<n-z$ and $\alpha = \gamma+1$.

\item[(iii)] If $\alpha =\gamma$, then
$$\mathrm{rank}\, A^k-\mathrm{rank}\, A^{k+1}=\left\{
\begin{array}{ll}l+2z&k\le \gamma -2,\\
2n-(\gamma -1)(l+2z)&k=\gamma-1,\\
0&k\ge \gamma,
\end{array}
\right.$$
so $A$ corresponds to the partition Table~\ref{tab:nn}(P1) $$(\gamma ^{2n-(\gamma -1)(l+2z)},(\gamma -1)^{\gamma (l+2z)-2n})=[2n]^{l+2z},$$ where $0\le z\le n-1$, $0<2m\le l<n-z$ and $\alpha= \gamma$.
\end{enumerate}

Conversely, it follows from the above proof that for each partition obtained in (i)-(iii) there indeed exists $A\in \mathcal{N}(B)$ that corresponds to this partition. The only case of existence that might not be obvious is when $l=2m$ and $r_k(X)$ is not divisible by $X$ for any $k\ge 1$. However, in this case we can define $p(X)=q(X)=1$, and then \eqref{eq:r_k-recursion} implies that $r_k(X)$ is a strictly positive constant for each $k\ge 1$, and hence non-zero.

{\bf Case 1.2(b): $l=2m$ and there exists $j\in \{\gamma,\ldots,\alpha-1\}$ such that $r_j(X)$ is divisible by $X$.} Since $r_2(X)=q(X)$ has non-zero constant term, we immediately get $j\ge 3$, and consequently $\alpha \ge 4$. As shown above, we may assume that $r_j(X)=X^t\widetilde{r_j}(X)$ for some $t\le m$ and some polynomial $\widetilde{r_j}(X) \in R$ with non-zero constant term. We have shown above also that $r_{j-1}(X)$ and $r_{j+1}(X)$ are not divisible by $X$. Moreover, since $l=2m$, we have $\frac{n-m}{l-m+z}+2=\frac{n+m+2z}{m+z}\ge \frac{n+m}{m+z}$, so $\gamma +2\ge \alpha$. It follows that $j\in \{\alpha -2,\alpha -1\}$. We consider each of the two options.

\begin{enumerate}
\item[(i)] Assume first that $j=\alpha -2$. Then $\gamma =\alpha -2$ and $\delta =\beta -2z$, so $\beta \ge 2z$ and $\delta <m-z$. (Recall that $\beta <m+ z$.)  Furthermore, from \eqref{eq:1.2rankA^k-divisible} we get
$$\mathrm{rank}\, A^{\alpha -2}=\left\{
\begin{array}{ll}  
2n-2(\alpha-2)(m+z)& m\ge t>\delta, \\
n+m-t-(\alpha-2)(m+z)&   t\le \delta.
\end{array}
\right.$$
Note that the third option in \eqref{eq:1.2rankA^k-divisible} is not possible, since it would imply $t\ge2m-\delta > m+z\ge m$, which contradicts our assumption on $t$.
If $t\le \delta$, then \eqref{eq:1.2rankA^k-not_divisible} and \eqref{eq:1.2rankA^k-divisible} imply 
$$\mathrm{rank}\, A^k-\mathrm{rank}\, A^{k+1}=\left\{
\begin{array}{ll}
2(m+z)&k\le \alpha -4,\\
2m+2z+t-\delta&k=\alpha -3,\\
m+z-t&k=\alpha -2,\\
m+z-\beta&k=\alpha-1,\\0&k \ge \alpha,
\end{array}
\right.$$
so $A$ corresponds to the partition Table~\ref{tab:nn}(P8),
\begin{eqnarray*}
\bfl&=&\left(\alpha^{m-z-\delta},(\alpha -1)^{\delta+2z -t},(\alpha -2)^{m+z+2t-\delta}, (\alpha -3)^{ \delta -t }\right)=\\
&=&\left([n+m-(\alpha -1)t]^{m+z-t},[n-m+(\alpha -1)t]^{m+z+t}\right),
\end{eqnarray*}
where $0\le z\le n-1$, $0<2m<n-z$, $\alpha\ge 5$, $\gamma=\alpha-2$ and $t\le \delta$.
On the other hand, if $m\ge t>\delta$, then 
$$\mathrm{rank}\, A^k-\mathrm{rank}\, A^{k+1}=\left\{
\begin{array}{ll}2(m+z)&k\le \alpha -3,\\
m+z-\delta &k=\alpha -2,\\
m+z-\beta&k=\alpha-1,\\
0&k \ge \alpha,
\end{array}
\right.$$
so $A$ corresponds to the partition Table~\ref{tab:nn}(P8), $(\alpha^{m-z-\delta},(\alpha -1)^{2z},(\alpha -2)^{m+z+\delta})$, which is a special case (for $t=\delta$) of the partition considered in the case when $t\le \delta$.

\item[(ii)] Assume now that $j=\alpha -1$, which is (because of the assumption $\gamma \le j\le \alpha -1$) equal either to $\gamma$ or to $\gamma +1$. Using \eqref{eq:1.2rankA^k-divisible}, we get 
$$\mathrm{rank}\, A^{\alpha -1}=\left\{
\begin{array}{ll}  
2n-2(\alpha-1)(m+z)&t>m-z+\beta, \\
m+z-t-\beta&  t\le \min\{m-z+\beta ,m+z-\beta\},\\
0 & m\ge t>m+z-\beta.
\end{array}
\right.$$
We consider two cases:
\begin{itemize}
\item
Assume first that $j=\gamma+1$. Then, as in (i), we get $\gamma =\alpha -2$ and $\delta =\beta -2z\ge 0$. In particular, we get $t\le m\le m-z+\beta$. 

If $t\le m+z-\beta$, then we get from \eqref{eq:1.2rankA^k-not_divisible} and \eqref{eq:1.2rankA^k-divisible} that
$$\mathrm{rank}\, A^k-\mathrm{rank}\, A^{k+1}=\left\{
\begin{array}{ll}
2(m+z)&k\le \alpha -4,\\
2m+2z-\delta&k=\alpha -3,\\
m+z+t & k=\alpha -2,\\
m+z-t-\beta&k=\alpha-1,\\0&k \ge \alpha,
\end{array}
\right.$$
so $A\in {\mathcal O}_\bfl$, where \begin{eqnarray*}
\bfl&=&\left(\alpha^{m+z-t-\beta},(\alpha -1)^{ \beta+2t},(\alpha -2)^{m+z-t-\delta},(\alpha -3)^{ \delta}\right)=\\
&=&\left([n+m+\gamma t]^{m+z+t},[n-m-\gamma t]^{m+z-t}\right)
\end{eqnarray*} 
as in Table~\ref{tab:nn}(P9) for $0\le z\le n-1$, $0<2m<n-z$, $\gamma=\alpha -2\ge 2$, and $t\le m+z-\beta$.

On the other hand, if $m\ge t>m+z-\beta$, then $$\mathrm{rank}\, A^k-\mathrm{rank}\, A^{k+1}=\left\{
\begin{array}{ll}
2(m+z)&k\le \alpha -4,\\
2m+2z-\delta&k=\alpha -3,\\
2m-\delta & k=\alpha -2,\\
0&k \ge \alpha-1,
\end{array}
\right.$$
so $A$ corresponds to the partition $\left( (\alpha -1)^{ 2m-\delta },(\alpha -2)^{2z},(\alpha -3)^{\delta }\right)$, which is a special case (for $t=m+z-\beta$) of the partition considered above.

\item
Assume now that $j=\gamma$, so $\gamma =\alpha -1$, or, equivalently, $z-m\le \beta <2z$. More precisely, we have $\beta =z-m+\delta$.

If $t\le \min\{m-z+\beta,m+z-\beta\}$, then we get from \eqref{eq:1.2rankA^k-not_divisible} and \eqref{eq:1.2rankA^k-divisible} that

$$\mathrm{rank}\, A^k-\mathrm{rank}\, A^{k+1}=\left\{
\begin{array}{ll}
2(m+z)&k\le \alpha -3,\\
m+3z+t-\beta&k=\alpha -2,\\
m+z-t-\beta&k=\alpha-1,\\0&k \ge \alpha,
\end{array}
\right.$$
so $A\in{\mathcal O}_\bfl$, where
\begin{eqnarray*}
\bfl&=&\left(\alpha^{m+z-t-\beta},(\alpha -1)^{ 2z+2t },(\alpha -2)^{m-z-t+\beta}\right)=\\
&=&\left(\alpha^{m+z-t-\beta},(\alpha -1)^{2t+\beta },(\alpha -1)^{2z-\beta},(\alpha -2)^{m-z-t+\beta}\right)=\\
&=&\left([n+m+(\alpha -2)t]^{m+z+t},[n-m-(\alpha -2)t]^{m+z-t}\right)
\end{eqnarray*}
corresponds to the partition Table~\ref{tab:nn}(P9) for $0\le z\le n-1$, $0<2m<n-z$, $\gamma=\alpha -1\ge 3$ and $t\le \min\{m+z-\beta,m-z+\beta\}$.

If $m\ge t>m+z-\beta$, then 
$$\mathrm{rank}\, A^k-\mathrm{rank}\, A^{k+1}=\left\{
\begin{array}{ll}
2(m+z)&k\le \alpha -3,\\
2n-2(\alpha -2)(m+z)&k=\alpha -2,\\
0&k \ge \alpha-1,
\end{array}
\right.$$
so $A$ corresponds to the partition $$\left((\alpha -1)^{2n-2(\alpha -2)(m+z)},(\alpha -2)^{2(\alpha -1)(m+z)-2n}\right)=[2n]^{2(m+z)},$$ which is of the form Table~\ref{tab:nn}(P1) and whose existence will be shown in Case 2.

Similarly, if $m\ge t>m-z+\beta$, then
$$\mathrm{rank}\, A^k-\mathrm{rank}\, A^{k+1}=\left\{
\begin{array}{ll}
2(m+z)&k\le \alpha -2,\\
2n-2(\alpha -1)(m+z)&k=\alpha -1,\\
0&k \ge \alpha,
\end{array}
\right.$$
so $A$ corresponds to the partition $(\alpha^{2n-2(\alpha -1)(m+z)},(\alpha -1)^{2\alpha (m+z)-2n})$, which is again of the form $[2n]^{2(m+z)}$.
\end{itemize}
\end{enumerate}
 
To finish the proof of Case 1 it remains to show that  when $F=\mathbb{C}$ the partitions obtained in the case 1.2(b) are indeed all possible. First we set the notation. Given polynomials $p(X),q(X)\in R$, we define the sequence of  polynomials $r_k(X)\in R$ by 
$$r_0(X)=0,\quad r_1(X)=1,\quad \mathrm{and}\quad r_{k+1}(X)=q(X)r_k(X)+p(X)r_{k-1}(X)\quad \mathrm{for}\quad k\ge 1.$$
Note that Lemma \ref{A^k} then holds for the polynomials $p(X)$, $q(X)$, and  $r_0(X), r_1(X),\ldots$.

Now let $z\le n-1$ be a non-negative integer, let $m<\frac{n-z}{2}$ be a positive integer, and let $\alpha =\left\lceil \frac{n+m}{m+z}\right\rceil$ and $\beta =\alpha(m+z)-(n+m)$. To show that all partitions obtained in the case~1.2(b) are indeed possible it suffices to show that for $j=\alpha-2\ge 3$ and $1\le t \le\min\{
2z+\beta,m\}
$, and for $j=\alpha-1\ge 3$ and  $1\le t\le \min\{ m+z-\beta,m-z+\beta\}$ there exist polynomials $p(X),q(X)\in R$ with non-zero constant terms such that $X$ does not divide $r_k(X)$ for any $k<\alpha$ satisfying $k\ne j$ and that $X^t$ is the highest power of $X$ that divides $r_j(X)$.  We will show that this happens if  we define $q(X)=1$ and $p(X)=X^t+u$ for a suitable non-zero $u\in \mathbb{C}$. In this case Lemma~\ref{A^k} implies that for $k\ge 1$ we have
$$r_{2k-1}(X)=\sum _{i=1}^k{2k-i-1\choose i-1}u^{i-1}+\sum _{i=1}^k(i-1){2k-i-1\choose i-1}u^{i-2}X^t+\sum_{i\geq 1}v_iX^{t+i}$$
and 
$$r_{2k}(X)=\sum_{i=0}^{k-1}{2k-i-1\choose i}u^i+\sum_{i=0}^{k-1}i{2k-i-1\choose i}u^{i-1}X^t+\sum_{i\geq 1}v'_iX^{t+i}$$
for some $v_i,v_i'\in \mathbb{C}$,
and let $r_0(X)=0$. We define $h_0(u):=0$ and for $k\ge 1$  let
$$h_{2k-1}(u):=\sum_{i=1}^k {2k-i-1\choose i-1}u^{i-1} \text{ and } h_{2k}(u):=\sum_{i=0}^{k-1}{2k-i-1\choose i}u^i.$$
It follows that
\begin{equation}\label{r<->c}
r_k(X)=h_k(u)+h_k'(u)X^t+\, \mathrm{higher}\, \mathrm{terms}
\end{equation}
for all $k\ge 0$ (where $h_k'$ is the derivative of $h_k$). Since $r_k(X)=r_{k-1}(X)+(X^t+u)r_{k-2}(X)$ for $k\ge 2$, we therefore get
$$h_0(u)=0 ,\quad h_1(u)=1, \quad \mathrm{and}\quad h_k(u)=h_{k-1}(u)+uh_{k-2}(u)\,\, \mathrm{ for}\, k\ge 2.$$

Note that we have assumed that $u\ne 0$. Assume now that $u\ne -\frac{1}{4}$ as well. Then we can show, either by solving the above difference equation or by induction, that
$$h_k(u)=\frac{1}{\sqrt{1+4u}}\left(\left(\frac{1+\sqrt{1+4u}}{2}\right)^k-\left(\frac{1-\sqrt{1+4u}}{2}\right)^k\right)$$
for each $k\ge 0$. Here we define the square root as the one that has argument within $[0,\pi)$.  An easy computation shows that
\begin{equation}\label{zeros_of_c}
h_k(u)=0\Leftrightarrow \sqrt{1+4u}=i\, \mathrm{tg}\, \frac{k'\pi}{k}\quad \text{for some }\, k'\in\{1,\ldots,k-1\}\setminus\left\{\frac{k}{2}\right\}.
\end{equation}
 (Note that $k'=0$ is not possible, since we have assumed that $u\ne -\frac{1}{4}$.)
Fix now 
$j\ge 3$ and assume that either $j=\alpha-2$ and $t\le \min\{2z+\beta,m\}$ or $j=\alpha-1$ and $t\le \min\{ m+z-\beta,m-z+\beta\}$ (and hence $\alpha \ge 4$ in both cases). Let $u_j=-\frac{1}{4\cos^2\frac{\pi}{j}}$. Then $\sqrt{1+4u_j}=i\, \mathrm{tg}\, \frac{\pi}{j}$ and $h_j(u_j)=0$ by (\ref{zeros_of_c}).

We now consider $k\ne j$.  We will show that $h_k(u_j)\ne 0$. Assume first that $k<j$. If $k'>\lfloor \frac{k}{2}\rfloor$, then $\mathrm{tg}\, \frac{k'\pi}{k}<0$ and therefore $i\, \mathrm{tg}\, \frac{k'\pi}{k}$ cannot be a square root and hence equal to $\sqrt{1+4u_j}$. On the other hand, if $k'\le \lfloor \frac{k}{2}\rfloor$, then $\frac{k'}{k}\ge \frac{1}{k}>\frac{1}{j}$ and hence $i\, \mathrm{tg}\, \frac{k'\pi}{k}\ne i\, \mathrm{tg}\, \frac{\pi}{j}=\sqrt{1+4u_j}$. On the other hand, if $k>j$, then  the only possibility 
is $j=\alpha -2$ and $k=\alpha -1$, as $k<\alpha$. If $k-1\ge k'>\lfloor  \frac{\alpha-1}{2}\rfloor$, then $\mathrm{tg}\, \frac{k'\pi}{k}<0$ and hence $i\, \mathrm{tg}\, \frac{k'\pi}{k}\ne \sqrt{1+4u_j}$. On the other hand, it is clear that $i\, \mathrm{tg}\, \frac{\pi}{\alpha-1}\ne \sqrt{1+4u_j}$ and that for $2\le k'\le \lfloor \frac{\alpha-1}{2}\rfloor$ we have $\frac{k'}{\alpha-1}>\frac{1}{\alpha-2}$, which again implies $i\, \mathrm{tg}\, \frac{k'\pi}{k}\ne \sqrt{1+4u_j}$.  Using (\ref{zeros_of_c}) we therefore see that $h_k(u_j)=0$ if and only if $k=j$, and (\ref{r<->c}) then implies that $r_k(X)$ is not divisible by $X$  for $k\ne j$ and that $r_j(X)$ is divisible by $X^t$  if we define $p(X)=X^t+u_j$.

It remains to show that $r_j(X)$ is not divisible by $X^{t+1}$, which is by (\ref{r<->c}) equivalent to $h_j'(u_j)\ne 0$. In the neighborhood of $1-\frac{1}{\cos^2\frac{\pi}{j}}$ the square root is a holomorphic function, so we can compute
\begin{align*}
    h_j'(u)=&-\frac{2}{(1+4u)^{\frac{3}{2}}}\left(\left(\frac{1+\sqrt{1+4u}}{2}\right)^j-\left(\frac{1-\sqrt{1+4u}}{2}\right)^j\right)+\\
&+\frac{j}{1+4u}\left(\left(\frac{1+\sqrt{1+4u}}{2}\right)^{j-1}+\left(\frac{1-\sqrt{1+4u}}{2}\right)^{j-1}\right)=\\
=&-\frac{2}{1+4u}h_j(u)+\frac{j}{1+4u}\left(\left(\frac{1+\sqrt{1+4u}}{2}\right)^{j-1}+\left(\frac{1-\sqrt{1+4u}}{2}\right)^{j-1}\right).
\end{align*}
In particular,
$$h_j'(u_j)=\frac{j}{1+4u_j}\left(\left(\frac{1+i\, \mathrm{tg}\, \frac{\pi}{j}}{2}\right)^{j-1}+\left(\frac{1-i\, \mathrm{tg}\, \frac{\pi}{j}}{2}\right)^{j-1}\right)=\frac{j\cos\frac{(j-1)\pi}{j}}{2^{j-2}(1+4u_j)\left(\cos\frac{\pi}{j}\right)^{j-1}}\ne 0$$
for $j\ge 3$, which had to be proved. It follows that all the partitions obtained in case 1.2(b) indeed correspond to nilpotent orbits in $\mathcal{N}(B)$.

\subsection{Case 2: \texorpdfstring{$C_0$}{TEXT} is a zero matrix} 
Since $A$ is non-zero and $z$ is maximal possible, we get that $C_1$ is not nilpotent. We consider two subcases.

\subsubsection{\bf Case 2.1: $C_1$ is invertible}  In this case it is clear that $\mathrm{rank}\, A^k=2n-2k(z+1)$ for $k<\frac{n}{z+1}$ and $A^k=0$ for $k\ge \frac{n}{z+1}$. Let $\alpha=\left\lceil \frac{n}{z+1}\right\rceil$.  Since we have assumed at the beginning of the proof that $A\ne 0$, we have 
$\alpha \ge 2$, and hence 
$$\mathrm{rank}\, A^k-\mathrm{rank}\, A^{k+1}=\left\{
\begin{array}{ll}
2(z+1)&k\le \alpha-2,\\
2n-2(\alpha-1)(z+1) & k=\alpha-1,\\
0&k \geq \alpha,
\end{array}
\right.$$
so $A$ corresponds to the partition Table~\ref{tab:nn}(P1) $$\left(\alpha^{  2n-2(\alpha-1)(z+1)},(\alpha -1)^{2\alpha(z+1)-2n}\right)=[2n]^{2(z+1)}.$$

Conversely, it is clear from the above proof that for each partition $[2n]^{2(z+1)}$ where $0\le z\le n-1$ there exists $A\in \mathcal{N}(B)$ corresponding to this partition. This concludes the proof for the even cases in the first set of partitions stated in the theorem.

\subsubsection{\bf Case 2.2: $C_1$ is not invertible} Since $C_1$ is not nilpotent and the multiplication with a non-zero constant has no influence on the Jordan canonical form of a nilpotent matrix, we may assume that $C_1$ is idempotent, so $C_1^2=C_1$. Moreover, we may conjugate $C$ by an invertible matrix in $\mathcal{C}(B)$ to assume that $C_1=\left[
\begin{array}{cc}1&0\\0&0
\end{array}
\right]$ or, equivalently, $a_1=1$ and $b_1=c_1=d_1=0$. We write matrix  $C$ as $C=\left[
\begin{array}{cc}Xa'(X)&X^2b'(X)\\X^2c'(X)&X^2d'(X)
\end{array}
\right]$ for some polynomials $a'(X),b'(X),c'(X),d'(X)$. As the constant term of $a'(X)$ is 1, it is invertible in $S$. Therefore, by defining first the constant term, then the linear term, and so on, we may find $f(X),r(X)\in S$ such that the equalities
\begin{eqnarray*}
a'(X)\left(f(X)-d'(X)\right)&=&-Xc'(X)b'(X)-Xf(X)d'(X)+Xf(X)^2,\\
a'(X)r(X)&=&X\left(c'(X)+d'(X)r(X)-b'(X)r(X)^2\right)
\end{eqnarray*}
are satisfied in $S$. We furthermore define
$$e(X)=a'(X)+Xb'(X)r(X) \text{ and } q(X)=-Xb'(X)\left(a'(X)-Xf(X)\right)^{-1}$$
in $S$, and the matrices $P=\left[
\begin{array}{cc}1&q(X)\\r(X)&1
\end{array}
\right]$ and $D=\left[
\begin{array}{cc}Xe(X)&0\\0&X^2f(X)
\end{array}
\right]$. Note that $e(X)$ is invertible in $S$, while $q(X)$ and $r(X)$ are divisible by $X$, which implies that matrix  $P$ is invertible in $M_2(S)$. Furthermore, a short calculation shows that $CP=PD$, so we may replace $C$ by a diagonal matrix $D=P^{-1}CP=\left[
\begin{array}{cc}Xe(X)&0\\0&X^mg(X)
\end{array}
\right]$ for some $m\ge 2$ where $e(X),g(X)\in S$ are invertible or $g(X)=0$ and $m=n$. Consequently, $P^{-1}AP=\left[
\begin{array}{cc}
X^{z+1}e(X)&0\\0&X^{z+m}g(X)
\end{array}
\right]$ and it is clear that such a matrix corresponds to the partition Table~\ref{tab:nn}(P5) $$([n]^{z+1},[n]^{z+m})$$ and that all such partitions are possible. Note that here we have $m\ge 2$; the case $m=1$ was considered in the case 2.1. \hfill{\qedsymbol}

\section{Examples and open questions}\label{sec:examples}
We conclude our paper with a few examples that illustrate the methods and results obtained by our work.

\begin{ex}
 Assume that $F=\mathbb{C}$. The list of orbits ${\mathcal O}_{\bfl}$ that have non-empty intersection with the nilpotent centralizer of a nilpotent matrix $B\in {\mathcal O}_{(6,6)}$ were not completely characterized yet, as noted in~\cite[Example~3.7]{Oblak2012}.  Among the undecided partitions $\bfl$ (listed in~\cite[Example~6.25]{oblak2008orbits}) there is only one partition $\bfl=(5,4,3)$ (of type (P3)), which has a non-empty intersection with ${\mathcal N}_B$. The complete list is presented in Table~\ref{tab:66}. 

\begin{table}[htb]
    \centering
    \begin{tabular}{|c|p{13cm}|}
        \hline
       &$\bfl$, such that $\mathcal{O}_{\bfl}$ has a non-empty intersection with ${\mathcal N}(B)$, $B\in \mathcal{O}_{(6,6)}$ \\
        \hline\hline
(P1)& $(12)$, $(6^2)$, $(4^3)$, $(3^4)$, $(3^2,2^3)$, $(2^6)$, $(2^5,1^2)$, $(2^4,1^4)$, $(2^3,1^6)$, $(2^2,1^8)$, $(2,1^{10})$, $(1^{12})$\\
\hline
(P2)&$(7,5)$, $(7,3,2)$, $(7,2^2,1)$, $(7,2,1^3)$, $(7,1^5)$, $(4^2, 2^2)$, $(4^2, 2,1^2)$, $(4^2, 1^4)$, $(4,3^2,2)$, $(4,3,2^2, 1)$, $(4,3,2, 1^3)$, $(4,3,1^5)$, $(3^3,1^3)$, $(3^2,2^2, 1^2)$, $(3^2,2, 1^{4})$, $(3,2^4,1)$, $(3,2^3,1^3)$, $(3,2^2,1^5)$\\
\hline
(P3)&\underline{$(5,4,3)$}\\
\hline
(P4)&$(4,2^4)$\\
\hline
(P5)&$(6,3^2)$, $(6,2^3)$, $(6,2^2,1^2)$, $(6,2,1^4)$, $(6,1^6)$, $(3^2,1^6)$\\
\hline
 \end{tabular}
    \caption{A complete list of the nilpotent orbits intersecting the nilpotent centralizer of $B\in \mathcal{O}_{(6,6)}$. Partitions listed for each type are only the ones that do not appear in lines above it. The underlined partition had not been previously resolved.}
    \label{tab:66}
\end{table}

Moreover, we also provide the complete list of ${\mathcal O}_{\bfl}$ that have non-empty intersection with the nilpotent centralizer of a nilpotent matrix $B\in {\mathcal O}_{(7,7)}$ in Table~\ref{tab:77}. It shows that also partitions of type 
(P6), (P8), and (P9) appear for $n=7$. 

\begin{table}[htb]
    \centering
    \begin{tabular}{|c|p{13cm}|}
        \hline
        &$\bfl$, such that $\mathcal{O}_{\bfl}$ has a non-empty intersection with ${\mathcal N}(B)$, $B\in \mathcal{O}_{(7,7)}$ \\
        \hline\hline
(P1)& $(14)$, $(7^2)$, $(5^2,4)$, $(4^2,3^2)$, $(3^4,2)$, $(3^2,2^4)$, $(2^7)$, $(2^6,1^2)$, $(2^5,1^4)$, $(2^4,1^6)$, $(2^3,1^8)$, $(2^2,1^{10})$, $(2,1^{12})$, $(1^{14})$.\\
\hline
(P2)&$(8,6)$, $(8,3^2)$, $(8,2^3)$, $(8,2^2,1^2)$, $(8,2,1^4)$, $(8,1^6)$, $(5,4,3,2)$, $(5,4,2^2,1)$, $(5,4,2,1^3)$, $(5,4,1^5)$, $(4^2,2^3)$, $(4^2,2^2,1^2)$, $(4^2,2,1^4)$, $(4^2,1^6)$, $(4,3^2,2,1^2)$, $(4,3^2,1^4)$,  $(3^3,2^2,1)$, $(3^3,2,1^3)$, $(3^3,1^5)$, $(3^2,2^3,1^2)$, $(3^2,2^2,1^4)$, $(3^2,2,1^6)$, $(3,2^5,1)$,  $(3,2^4,1^3)$, $(3,2^3,1^5)$, \\
\hline
(P3)&\\
\hline
(P4)&$(6,4^2)$, $(4,3,2^3,1)$, $(4,2^5)$\\
\hline
(P5)&$(7,4,3)$, $(7,3,2^2)$, $(7,2^3,1)$, $(7,2^2,1^3)$, $(7,2,1^5)$, $(7,1^7)$, $(4,3^2,2^2)$, $(4,3,2^2,1^3)$, $(4,3,2,1^5)$, $(4,3, 1^7)$, $(3,2^2,1^7)$ \\
\hline
(P6)&$(4^3,2)$\\
\hline
(P7)&\\
\hline
(P8)&$(5,3^3)$\\
\hline
(P9)&$(3^4,1^2)$\\
\hline
 \end{tabular}
    \caption{A complete list of the nilpotent orbits intersecting the nilpotent centralizer of $B\in \mathcal{O}_{(7,7)}$. Partitions listed for each type are only the ones that do not appear in lines above it.}
    \label{tab:77}
\end{table}

 Note that examples $n=6$ and $n=7$ show different structures of partitions $\lambda$ for which the nilpotent orbit ${\mathcal O}_{\lambda}$ intersects ${\mathcal N}(B)$.  Although partitions of type (P7) do not appear in these two cases, note that for example for $n=22$ ($z=1$, $m=3$, $l=5$, $\alpha=4$, $\beta=5$) the partition $(7^3,6^3,5)$ is of the form (P7) but not (P1)-(P6). This shows that, although types (P1)-(P9) of partitions in Table~\ref{tab:nn} are not complementary, they are all significant to describe ${\mathcal N}(B)$.
\end{ex}

\begin{ex}
Let $n=11$ and consider the nilpotent orbit $\mathcal{O}_{(7,5^3)}$. It can be checked that this orbit is of type (P8) with the corresponding parameters $z=0$, $m=2$, $t=1$, and that it does not belong to any of the types (P1)-(P7) or (P9). We will show that it does not intersect with the nilpotent centralizer of a nilpotent matrix $B\in \mathcal{O}_{(11,11)}$ if $F=\mathbb{Q}$. 

As shown in the proof of Theorem \ref{main}, we may assume that the corresponding matrix in $\mathcal{N}(B)$ is of the form $A=\left[
\begin{array}{cc}0&1\\X^4p(X)&X^2q(X)
\end{array}
\right]$, and let $p_0$ and $q_0$ be the (non-zero) constant terms of $p(X)$ and $q(X)$, respectively. A short calculation shows that
$$A^5=\left[
\begin{array}{cc}X^{10}\Big(2p(X)^2q(X)+p(X)q(X)^3\Big)&X^8\Big(p(X)^2+3p(X)q(X)^2+q(X)^4\Big)\\0&X^{10}\Big(3p(X)^2q(X)+4p(X)q(X)^3+q(x)^5\Big)
\end{array}
\right].$$
By the assumption the matrix $A$ corresponds to the partition $(7,5^3)$, which implies that $\mathrm{rank}\, A^5=2$, and consequently the constant term of $p(X)^2+3p(X)q(X)^2+q(X)^4$ has to be zero. However, the equation $p_0^2+3p_0q_0^2+q_0^4=0$ has no non-zero solutions in $\mathbb{Q}$, which shows that the nilpotent orbit $\mathcal{O}_{(7,5^3)}$ does not intersect the nilpotent centralizer of $B$ if $F=\mathbb{Q}$.
\end{ex}

\begin{ex}
Let $n=9$ and consider the nilpotent orbit $\mathcal{O}_{(5^3,3)}$, which is of type (P9) with parameters $z=0$, $m=2$, $t=1$, and it does not belong to any of the types (P1)-(P8). Let $A=\left[
\begin{array}{cc}0&1\\X^4p(X)&X^2q(X)
\end{array}
\right]$ be the corresponding matrix in $\mathcal{N}(B)$. Then
$$A^5=
\left[
\begin{array}{cc}0&X^8\Big(p(X)^2+3p(X)q^2(X)+q(X)^4\Big)\\0&0
\end{array}
\right],$$
which has to be the zero matrix, as $A$ corresponds to the partition $(5^3,3)$. As in the previous example we get the equation $p_0^2+3p_0q_0^2+q_0^4=0$, which has no non-zero solutions in $\mathbb{Q}$. It follows that the nilpotent orbit $\mathcal{O}_{(5^3,3)}$ of type (P9) does not intersect the nilpotent centralizer of $B\in \mathcal{O}_{(9,9)}$ if $F=\mathbb{Q}$.
\end{ex}

Although we were able to characterize nilpotent orbits ${\mathcal O}_{\bfl}$ that have non-empty intersection with the nilpotent centralizer of a nilpotent matrix $B\in {\mathcal O}_{(n,n)}$, the partitions $\bfl$ in Table~\ref{tab:nn} are presented in a rather scattered way. 

\begin{que}
 Is there a combinatorial description of partitions obtained in Table~\ref{tab:nn}?
\end{que}

Moreover, there are plenty of possible generalizations to Theorem~\ref{main} that would be interesting to resolve.

\begin{que}
 Do any of the above methods apply to the case when:
  \begin{enumerate}
      \item $B\in {\mathcal O}_{(\mu_1,\mu_2)}$,  $\mu_1\ne \mu_2$, i.e., in the case when the matrix $B$ still has just two Jordan blocks, but different in size,
      \item $B\in {\mathcal O}_{(n^\ell)}$,  $\ell \geq 3$, i.e., in the case when the matrix $B$ has at least three Jordan blocks, all of the same size,
      \item we consider some special nilpotent orbits over other simple Lie algebras?
  \end{enumerate}
  Or, alternatively, could we use the results from Theorem~\ref{main} to further extend the partial answers to open questions mentioned in this paper?
\end{que}

\bibliographystyle{plain}
\bibliography{bibliography}

\end{document}